\newtheorem{defi}{Definition}
\newtheorem{thm}{Theorem}
\newtheorem{pro}{Proposition}
\newtheorem{lem}{Lemma}
\newtheorem{rmk}{Remark}
\newtheorem{nota}{Notation}
\newcommand{\R}{\mathbb{R}}
\newcommand{\N}{\mathbb{N}}
\begin{document}

\pagestyle{fancy}
\rhead{}
\lhead{}
\chead{}
\rfoot{}
\cfoot{\thepage}
\lfoot{}

\renewcommand{\headrulewidth}{0pt}
\renewcommand{\footrulewidth}{0pt}

\title{Mass transportation on sub-Riemannian structures of rank two in dimension four}
\author{Z. BADREDDINE\thanks{Université Côte d'Azur, Inria, CNRS, LJAD, France; Université de Bourgogne, Institut de Mathématiques de Bourgogne, France}}
\date{}
\maketitle

\begin{abstract}
This paper is concerned with the study of the Monge optimal transport problem in sub-Riemannian manifolds where the cost is given by the square of the sub-Riemannian distance. Our aim is to extend previous results on existence and uniqueness of optimal transport maps to cases of sub-Riemannian structures which admit many singular minimizing geodesics.  We treat here the case of sub-Riemannian structures of rank two in dimension four.
\end{abstract}

\section*{Introduction}

Let $M$ be a smooth connected manifold without boundary of dimension $n\geq 2$. The problem of optimal transportation, raised by Monge~\cite{Mo81} in $1781$, was concerned with the transport of a pile of soil into an excavation. Given two probability measures $\mu, \nu$ on $M$, we call the transport map from $\mu$ to $\nu$, any measurable application $T:M\rightarrow M$ such that $T_{\sharp} \mu=\nu$ (we say that $T$ is pushing forward $\mu$ to $\nu$, ie. for every measurable set $B$ in $M$, $\mu(T^{-1}(B))= \nu (B)$).Therefore, the Monge problem was modelized as an optimal transport problem consisting in minimizing the transportation cost
$$
\displaystyle{\int_{M} c(x, T(x)) \mathrm{d}\mu(x)},
$$
among all the transport maps $T:M \rightarrow M$. 

\begin{center}
\begin{figure}
\includegraphics[scale=0.5]{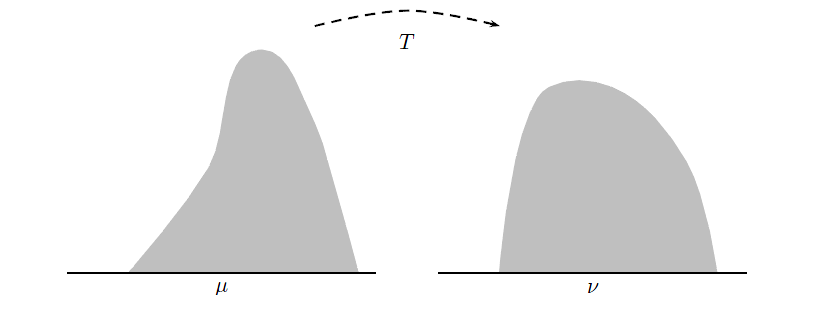}
\end{figure}
\end{center}

Here, $c(x,y)$ represents the cost of transporting a unit of mass from a position $x$ to some position $y$. The fact that the condition $T_{\sharp}\mu =\nu$ is nonlinear with respect to $T$, is the main difficulty in solving the Monge problem. \\

In $1942$, Kantorovitch~\cite{Ka42} proved a duality theorem to study the relaxed form of the problem. He replaced the transport map $T:M\rightarrow M$ by a transport plan $\alpha \in \Pi(\mu, \nu)$ where $\Pi(\mu, \nu)$ is the set of probability measures $\alpha$ in the product $M\times M$ with $P^{1}_{\sharp}(\alpha)= \mu$ and  $P^{2}_{\sharp}(\alpha)= \nu$ ( where $P^{i}: M \times M \rightarrow M$ the projection map into the i-th component). Hence, Kantorovitch problem consists in minimizing 
$$ \displaystyle{\int_{M\times M} c(x,y) \mathrm{d}\alpha (x,y)},\text{among all the transport plans $\alpha \in \Pi(\mu, \nu)$.}$$ 
The Kantorovitch's approach leads to a dual formulation (see Chapter 5 \cite{Vil08}) given by:

\begin{multline}\label{pbdual}
\inf_{\alpha \in \Pi(\mu, \nu)} \left \{ \int_{M\times M} c(x,y) \mathrm{d}\alpha(x,y)\right \} =\\
\sup_{\begin{array}{l}
(\varphi, \psi) \in L^{1}(\mu)\times L^{1}(\nu)\\
 \psi(y)- \varphi(x)\leq c(x,y)
 \end{array}} \left \{ \int_{M} \psi(y) \mathrm{d}\nu(y) - \int_{M} \varphi(x) \mathrm{d}\mu(x)\right \} .
 \end{multline}
This leads to find a pair of integrable functions $(\varphi, \psi)$ optimal on the right-hand side, and a transport plan $\alpha$ optimal on the left-hand side. The pair of functions $(\varphi, \psi)$ should satisfy $\psi(y)- \varphi(x) \leq c(x, y)$. Then, for a given $y$, $\psi(y)$ will be the infinimum of $\varphi(x) + c(x, y)$ among all $x$. For a given $x$, $\varphi(x)$ will be the supremum of $\psi(y)- c(x, y)$ among all $y$. We may indeed assume that $\varphi$ is a c-convex function and $\psi= \varphi^{c}$ satisfying the two equations below:

\begin{subequations}
\begin{gather}
\varphi(x)  = \sup_{y\in M}\Bigl \{ \varphi^{c}(y)- c(x,y)\Bigr \}, \ \forall x \in M \label{kanto1} \\
\varphi^{c}(y)  = \inf_{x\in M}\Bigl \{ \varphi(x) +c(x,y)\Bigr \}, \  \forall y \in M \label{kanto2}
\end{gather}
\end{subequations} 
The pair $(\varphi, \varphi^{c})$ is called the Kantorovitch potentials.\\

We refer the reader to the textbooks ~\cite{Vil03, Vil08} by Villani for more details on the optimal transport theory.
\vspace*{0.5cm}

Several techniques developed by Brenier~\cite{Br91}, McCann~\cite{Mc01}, Cavalletti and Huesmann \cite{CH15} and others allow to show that in certain cases, optimal transport plans yields indeed optimal transport maps, solutions to the Monge problem. \\

This paper will be concerned with the study of the Monge problem for the quadratic geodesic sub-Riemannian cost. Let $(\Delta, g)$ be a complete sub-Riemannian structure on $M$, where $\Delta$ is a totally nonholonomic distribution on $M$ of rank $m$ $(m<n)$ and $g$ a smooth Riemannian metric on $\Delta$, that is for every $x\in M$, $g_{x}$ is a scalar product on $\Delta({x})$. We recall that a distribution $\Delta$ is called totally nonholonomic if, for every $x\in M$, there exist an open neighborhood $\mathcal{V}_{x}$ of $x$ and a local frame $X^{1}_{x}, \dots, X^{m}_{x}$ on $\mathcal{V}_{x}$ such that 
$$
\mbox{Lie}\Bigl \{ X^{1}_{x}, \dots, X^{m}_{x} \Bigr \}(y) = T_{y}M,\ \forall y\in \mathcal{V}_{x}.
$$ 
Let $T>0$. A continuous path $\gamma : [0, T]\rightarrow M$ is said to be horizontal with respect to $\Delta$ if it is absolutely continuous with square integrable derivative and satisfies
$$
\dot{\gamma}(t) \in \Delta (\gamma(t)), \ a.e. \ t\in [0, T].
$$
The length of an horizontal path $\gamma$ is given by 
$$l(\gamma):=\displaystyle{ \int_{0}^{T}\sqrt{g_{\gamma(t)}(\dot{\gamma}(t), \dot{\gamma}(t))} \mathrm{d}t}.$$
We define the sub-Riemannian distance $d_{SR}(x, y)$ between two points $x$ and $y$ of $M$ as the infinimum of lengths of horizontal paths joining $x$ to $y$, that is,
$$d_{SR}(x, y) := \inf \left \{l(\gamma)| \ \gamma:[0,T]\rightarrow M\ \text{horizontal path s.t.}\ \gamma(0)=x, \gamma(T)=y\right \}.$$

A minimizing geodesic is an horizontal path with constant speed minimizing for the sub-Riemannian distance between its end-points. We shall say that the sub-Riemannian structure $(\Delta, g)$ on $M$ is complete if the metric space $(M, d_{SR})$ is complete. Thanks to the Hopf-Rinow theorem (see~\cite{Rif14}), if $(\Delta,g)$ is a complete sub-Riemannian structure on $M$, then minimizing geodesics exist between any pair of points in $M$. Let $\{ X^{1}, \dots, X^{k} \}$ be $k\leq m(n+1)$ smooth vector fields generating $\Delta$ (see proposition $1.1.8$~\cite{Rif14}), that is for every $y\in M$, 
$$
\Delta(y)= \mbox{Span}\Bigl \{ X^{1}(y), \dots, X^{k}(y)\Bigr \}.
$$ 
Given $x\in M$ and $T>0$, the End-point mapping from $x$ is defined by
$$\begin{array}{lccl}
E^{x}:& L^{2}([0,T], \R^{k}) & \rightarrow & M\\
&u& \mapsto &E^{x}(u) = \gamma_{u}(T)
\end{array}$$
where $\gamma_{u} :[0,T]\rightarrow M$ is the unique solution to the Cauchy problem:
\begin{equation}\label{ODE}
\left\{ 
\begin{array}{lcl}
\dot{\gamma}_{u}(t) &=&\displaystyle{ \sum_{i=1}^{k} u_{i}(t) X^{i}(\gamma_{u}(t))},\ \forall t\in [0,1]\\
\gamma_{u}(0)&=& x
\end{array}
\right..
\end{equation}
A control $u$ is called singular if and only if it is a critical point of $E^{x}$, and regular if not. An horizontal path $\gamma$ is said to be singular (resp. regular) if and only if any control $u$ associated to $\gamma$ (i.e. $\gamma=\gamma_{u}$ solution of (\ref{ODE})) is singular (resp. regular) for $E^{x}$.\\ 
For every $x\in M$ and every $T>0$, we denote by $\Omega^{R}_{x, T}$ the set of regular minimizing geodesics $\gamma:[0,T]\rightarrow M$ starting at $x$.
We also denote by $\Omega^{S}_{x, T}$ the set of singular minimizing geodesics $\gamma:[0,T]\rightarrow M$ starting at $x$. \\

 The notion of singular curves play a major role in this paper. In absence of singular minimizing geodesics, sub-Riemannian distances enjoy the same kind of regularity as Riemannian distances at least outside the diagonal. We recall that the diagonal of $M\times M$ is the set of all pairs of the form $(x, x)$ with $x\in M$. Following previous results by Ambrosio-Rigot~\cite{AR04} and Agrachev-Lee~\cite{AL09}, Figalli and Rifford (see~\cite{FR10}) proved that local lipschitzness of the sub-Riemannian distance outside the diagonal is sufficient to guarantee existence and uniqueness of optimal transport maps (see also the textbook~\cite{Rif14} by Rifford).\\
 
 In general, we do not know if the Monge problem (for the sub-Riemannian quadratic cost) admits solutions if there are singular minimizing curves. For a two-rank distribution $ \Delta$ on a three-dimensional manifold $M$, we have existence and uniqueness of optimal transport maps for the sub-Riemannian quadratic cost because non-trivial singular horizontal paths are included in the Martinet surface $\Sigma_{\Delta}$ given by $\Sigma_{\Delta}:=\{ x\in M | \ \Delta(x) + [\Delta, \Delta](x) \neq T_{x}M \}$ which has Lebesgue measure zero. The first relevant case to consider is the one of rank-two distributions in dimension four. In this case, as shown by Sussman~\cite{Sus96}, singular horizontal paths can be seen (locally) as the orbits of a smooth vector field, at least, outside a set of Lebesgue measure zero.\\

The definition of a real analytic manifold is similar to that of a smooth manifold. We begin by recalling that an analytic function $f$ is an infinitely differentiable function such that the Taylor series at any point $x_{0}$ in its domain, converges to $f(x)$ for $x$ in a neighborhood of $x_{0}$. We say that a manifold $M$ of dimension $n$ is real analytic if transition maps are analytic. We provide $M$ with a real analytic distribution $\Delta$ of rank $m$ $(m<n)$, that is for each $x\in M$, there is an open neighborhood $\mathcal{U}$ containing $x$ and $m$ analytic vector fields $X^{1},\dots,X^{m}$ on $\mathcal{U}$ such that $$\Delta(y)= Span\{X^{1}(y),\dots,X^{m}(y) \}, \ \forall y\in \mathcal{U}.$$
In this case, the Cauchy problem given in (\ref{ODE}), has a real analytic solution on $M$ for $t\in [0,T]$ and some $T>0$.\\

The aim of this paper is to show that, in the case of rank-two analytic distribution in dimension four, we have existence and uniqueness of optimal transport maps for the sub-Riemannian quadratic cost, as soon as the distribution satisfies some growth condition. \\

We recall that the support of a measure $\mu$, denoted by $supp(\mu)$, refers to the smallest closed set $F\subset M$ of full mass $\mu(F) =\mu(M)=1$.\\
\newpage
Our main result is the following:
\begin{thm}\label{1}
Let $M$ be a real analytic manifold of dimension $4$ and $(\Delta, g)$ be a complete analytic sub-Riemannian structure of rank $2$ on $M$ such that
\begin{eqnarray}\label{ASS}
\forall x\in M,\ \Delta(x) + [\Delta, \Delta](x) \ \text{has dimension 3},
\end{eqnarray}
where $$ [\Delta, \Delta]:=\{ [X,Y]\ |\ X,Y\ \text{sections of}\ \Delta \}.$$
 Let $\mu$, $\nu$ be two probability measures with compact support on $M$ such that $\mu$ is absolutely continuous with respect to the Lebesgue measure $\mathcal{L}^{4}$.\\
 Then, there is existence and uniqueness of an optimal transport map from $\mu$ to $\nu$ for the sub-Riemmannian quadratic cost $c: M\times M \rightarrow [0, + \infty[$ defined by:
$$c(x,y):= d_{SR}^{2}(x,y),\ \forall (x,y) \in M\times M.$$
\end{thm}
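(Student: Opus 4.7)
The plan is to follow the Figalli--Rifford strategy for optimal transportation with a sub-Riemannian quadratic cost, which reduces existence and uniqueness of the Monge map to showing that the c-subdifferential of a Kantorovich potential is single-valued $\mu$-almost everywhere. The specific input required here is a description of singular minimizing geodesics tailored to rank-two, dimension-four distributions with growth vector $(2,3,4)$.

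First I would apply the Kantorovich duality (\ref{pbdual}) to produce a c-convex potential $\varphi$ and its c-conjugate $\varphi^{c}$, satisfying (\ref{kanto1})--(\ref{kanto2}). Since $\text{supp}(\mu)$ and $\text{supp}(\nu)$ are compact and $d_{SR}^{2}$ is continuous (the structure is complete), $\varphi$ is bounded on a neighborhood of $\text{supp}(\mu)$. Any optimal plan $\alpha \in \Pi(\mu,\nu)$ is concentrated on the c-subdifferential $\partial^{c}\varphi := \{(x,y) : \varphi^{c}(y) - \varphi(x) = c(x,y)\}$, so constructing a Monge map and proving uniqueness reduces to showing that for $\mu$-a.e. $x \in \text{supp}(\mu)$ the slice $\partial^{c}\varphi(x) := \{y : (x,y) \in \partial^{c}\varphi\}$ is a singleton, from which $T(x)$ will be the unique such $y$.

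Next, using the sub-Riemannian exponential map together with compactness of the supports, one shows that $\varphi$ is locally Lipschitz on an open set covering $\mu$-almost all of $\text{supp}(\mu)$ (the trivial static part where an optimal $y$ equals $x$ can be set aside), hence differentiable Lebesgue-almost everywhere by Rademacher's theorem, and therefore $\mu$-a.e. since $\mu \ll \mathcal{L}^{4}$. At a differentiability point $x$ of $\varphi$, the Pontryagin Maximum Principle forces any \emph{normal} minimizing geodesic joining $x$ to a point $y \in \partial^{c}\varphi(x)$ to have initial covector $d\varphi(x)$; consequently $y$ is the endpoint of the normal geodesic issued from $(x,d\varphi(x))$ at time one, which pins down $T(x)$ uniquely, provided at least one minimizer from $x$ to some element of $\partial^{c}\varphi(x)$ is non-singular.

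The main obstacle, and the novel content of the theorem, is to exclude that on a subset of $\text{supp}(\mu)$ of positive $\mu$-measure every minimizer from $x$ to an element of $\partial^{c}\varphi(x)$ is singular. Here the hypothesis (\ref{ASS}) that $\dim(\Delta + [\Delta,\Delta]) \equiv 3$, combined with total nonholonomy and $\dim M = 4$, forces the growth vector to be $(2,3,4)$ (Engel-type), and by Sussmann's theorem singular horizontal paths are, outside a Lebesgue-negligible analytic set, integral curves of a single analytic vector field $Z$ canonically determined by $\Delta$. In the real-analytic setting, the locus of points $x \in \text{supp}(\mu)$ from which a purely singular minimizing curve realizes the c-subdifferential constraint is carved out by analytic equations (Hamiltonian constancy along the singular extremal together with the transversality conditions inherited from c-convexity of $\varphi$), and the growth condition rules out that this locus coincides with a full-dimensional piece of $M$; it is therefore contained in a proper analytic subset, which has Lebesgue measure zero and hence $\mu$-measure zero. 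This last analytic stratification step is the most delicate part, because one has to handle the interaction between Sussmann's line field and the (possibly non-smooth) Kantorovich potential, but once it is carried out the c-subdifferential is single-valued $\mu$-a.e., yielding the existence and uniqueness of the optimal transport map.
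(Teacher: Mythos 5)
Your opening reduction (Kantorovich duality, contact set, reduce to single-valuedness of $\partial^{c}\varphi(x)$ for $\mu$-a.e.~$x$, handle static points, normal geodesics at differentiability points via PMP) is aligned with the paper's overall framework, and you correctly observe that hypothesis (\ref{ASS}) makes Sussmann's description of singular curves as the orbits of an analytic line field available. But the last paragraph of your proposal contains a genuine gap, and it is precisely the point where the paper's real work lies.

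You assert that the set of $x$ from which the transport is realized only by singular minimizers ``is carved out by analytic equations \dots and the growth condition rules out that this locus coincides with a full-dimensional piece of $M$; it is therefore contained in a proper analytic subset, which has Lebesgue measure zero.'' There is no reason this should hold, and the paper does not prove it. Singular curves foliate a full-measure open set $\mathcal{H}$ of $M$ (Lemma 5), so the possibility that the transport follows singular geodesics on a set of positive Lebesgue measure cannot be excluded a priori. Furthermore, the constraint ``$\varphi$ achieves its c-conjugate bound along a singular curve from $x$'' is not an analytic equation in $x$ alone: it depends on the (merely $d_{SR}$-Lipschitz, in general not Euclidean-Lipschitz) potential $\varphi$, so there is no analytic stratification argument available. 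This is exactly the obstruction that forced the paper to introduce a second, independent mechanism. Instead of trying to shrink the singular locus to measure zero, the paper proves a localized measure-contraction inequality for the flow of the singular line field $X$ (Lemmas 7--9: $div_x X \geq -C\lvert X(x)\rvert$ and hence $\mathcal{L}^4(A^{S}_t) \geq e^{-C\,l(A,t)}\mathcal{L}^4(A)$), and then uses Cavalletti--Huesmann's argument (Lemma 11) to show that $\Gamma^{S}(x)$ is a singleton for a.e.~$x$ in the singular moving set $\mathcal{M}^{S}$, without any claim that $\mathcal{M}^{S}$ is small. The remaining ambiguity, that a given $x$ might have both a regular and a singular optimal target, is then handled by combining the singular contraction with a parallel contraction estimate for the regular optimal flow (Lemma 12) to show $\mathcal{M}^{R}\cap\mathcal{M}^{S}$ has measure zero (Lemma 14). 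So the role of the line field in the paper is to produce a quantitative divergence bound, not a negligible analytic set; as written, your proposal would only apply under the additional (and in general false) hypothesis that singular transport is $\mathcal{L}^4$-exceptional.

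A secondary issue: your intermediate claim that $\varphi$ is ``locally Lipschitz on an open set covering $\mu$-almost all of $\text{supp}(\mu)$, hence differentiable Lebesgue-a.e.\ by Rademacher'' is not available here. In the presence of singular minimizers $d_{SR}^{2}$ (and hence $\varphi$) is only known to be locally Lipschitz for the sub-Riemannian distance, which gives Pansu differentiability along horizontal directions (used in the paper's Appendix A for the static part), not full Euclidean differentiability. The paper circumvents this by producing, on the regular moving set, a local $C^2$ support function for $\varphi$ (via the regular End-point map and the inverse function theorem), which yields semiconvexity and Alexandrov twice-differentiability on the relevant subset; this is the content of Lemma 4 and the sets $W_k$. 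Your argument would need to be repaired at this step as well.
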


\vspace*{1cm}

Our strategy to prove Theorem \ref{1} is twofold. It combines the technique used by Figalli-Rifford~\cite{FR10}  (see also the paper by Agrachev-Lee~\cite{AL09}) which is based on the regularity of the distance function outside the diagonal in absence of singular minimizing curves, together with a localized contraction property for singular curves in the spirit of a previous work by Cavalletti and Huesmann~\cite{CH15}. \\

The paper is organized as follows. In Section 1, we give more details on the strategy of proof. Then Section 2-3 are devoted to prove some required results to achieve existence and uniqueness of optimal transport maps. In Section 4, we finalize the proof of Theorem \ref{1}.

\section{Strategy of proof}\label{strategy}

From now on, we assume that the manifold $M$ has dimension $4$ and is equipped with a complete sub-Riemannian structure $(\Delta, g)$ of rank $2$ such that
$$
\forall x\in M,\ \Delta(x) + [\Delta, \Delta](x) \ \text{has dimension 3}.
$$ 
We fix $\mu, \nu$ two probability measures compactly supported on $M$ such that $\mu$ is absolutely continuous with respect to the Lebesgue measure. As it is well-know (see~\cite{Vil08}), since $c=d_{SR}^2$ is continuous on $M\times M$, the Kantorovitch transport problem between $\mu$ and $\nu$ with cost $c$ admits at least one solution and there is a pair of Kantorovitch potentials $(\varphi, \varphi^{c})$ solution of the dual problem satisfying the equations (\ref{kanto1}) and (\ref{kanto2}). Moreover, we denote by $\Gamma$ the contact set of the pair $(\varphi, \varphi^{c})$ given by
$$
\Gamma:=\Bigl \{ (x,y) \in M\times M |\ \varphi^{c}(y) - \varphi(x) =c(x,y)\Bigr \}.
$$
We get that (see Corollary 3.2.14 \cite{Rif14}):
\begin{center} a transport plan $\alpha \in \Pi(\mu, \nu)$ is optimal if and only if $\alpha( \Gamma)=1$.\end{center}
 In other words, the problem of existence and uniqueness of optimal transport maps can be reduced to prove that $\Gamma$ is concentrated on a graph, that is to show that for $\mu$--almost every point $x \in M$ the set 
$$
 \Gamma(x) :=\Bigl \{ y\in M |\ (x,y) \in \Gamma \Bigr \}\ \text{is a singleton}.
 $$
\vspace*{0.5cm} 
 
Following ~\cite{FR10}, let us  introduce the following definition:

\begin{defi}\label{2}
We call "moving" set $\mathcal{M}$ and "static" set $\mathcal{S}$ respectively the sets defined as follows:

$$\mathcal{S}:=\Bigl \{ x\in M | \ x\in \Gamma(x)\Bigr \},$$

$$\mathcal{M}:=\Bigl \{ x\in M | \ x\notin \Gamma(x)\Bigr \}.$$

\end{defi}

We note that $\mathcal{M}$ is an open subset of $M$. In fact, we can easily check that $\mathcal{M}$ coincides with the set 
$$\{ x\in M\ |\ \varphi(x)\neq \varphi^{c}(x)\} = \{ x\in M\ |\ \varphi(x)> \varphi^{c}(x)\},$$
 which is open by continuity of $\varphi$ and $\varphi^{c}$. \\
 
Since both $supp(\mu)$ and $supp(\nu)$ are compact and the metric space $(M, d_{SR})$ is complete, there are $x_{0}\in M$ and a constant $L>0$ such that 
$$\displaystyle{supp(\mu) \bigcup supp(\nu) \subset B_{SR}(x_{0}, L/4)}$$
where $\displaystyle{B_{SR}(x_{0}, L/4)}$ is the open ball in $\R^{4}$ centered at $x_{0}$ of radius $L$.\\
As a consequence, any minimizing geodesic $\gamma:[0,1]\rightarrow M$ from $x\in supp(\mu)$ to $y\in supp(\nu)$ is contained in $\displaystyle{B_{SR}(x_{0}, L/2)}$.\\

From now on, we work in the compact set $\displaystyle{B_{SR}(x_{0}, L/2)}$ of diameter $L$ and so, we proceed as if $M$ were a compact manifold.

\begin{center}
\begin{tikzpicture}[scale=0.5]
\draw [very thick](5,0) ellipse (12cm and 6cm);
\draw (-6,3) node[left]{$B_{SR}(x_{0}, L/2)$};
\draw[->,>=latex] (0,2) to[bend left] (10,2);
\draw[->,>=latex] (0,1) to[bend left] (10,1);
\draw[->,>=latex] (0,0) to[bend left] (10,0);
\draw[->,>=latex] (0,-1) to[bend left] (10,-1);
\draw[->,>=latex] (0,-2) to[bend left] (10,-2);
\draw [thick] plot [smooth cycle] coordinates {(2,4)(-4, 0) (1,-4)(3.8,0) };
\draw (0,-4.3) node [below]{supp $\mu$};
\draw [thick] plot [smooth cycle] coordinates {(8,4)(7,2) (8,0)(7,-1.5)(10,-4)(12,-3)(14,3.2) };
\draw (10,-4.3) node [below]{supp $\nu$};
\end{tikzpicture}
\end{center}
\vspace*{1cm}

As in \cite{FR10}, we shall show that "static" points do not move, i.e. almost every $x\in \mathcal{S}$ is transported to itself. For sake of completeness, the proof of Lemma \ref{PROPstatic} is given in Appendix \ref{A}.

\begin{lem}\label{PROPstatic}
For $\mu-a.e.\ x\in \mathcal{S}$ , we have $\Gamma(x)=\{x\}$.
\end{lem}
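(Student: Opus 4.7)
My plan is to argue by contradiction using the Lebesgue density theorem combined with a Pythagoras-type inequality coming from Kantorovitch duality on the contact set $\Gamma$.

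Assume the conclusion fails. Then $E := \{x \in \mathcal{S} : \Gamma(x) \neq \{x\}\}$ has positive $\mu$-measure, hence positive $\mathcal{L}^{4}$-measure since $\mu \ll \mathcal{L}^{4}$. Writing $E = \bigcup_{n} E_{n}$ with $E_{n} := \{x \in \mathcal{S} : \exists\, y \in \Gamma(x),\ d_{SR}(x,y) \geq 1/n\}$, some $E_{n}$ has positive Lebesgue measure. In our analytic rank-two-in-dimension-four setting, $(M, d_{SR}, \mathcal{L}^{4})$ is a doubling metric measure space, so the general Lebesgue differentiation theorem yields a point $x_{0} \in E_{n}$ which is a density point of $\mathcal{S}$ with respect to sub-Riemannian balls. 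Fix such an $x_{0}$, a corresponding $y \in \Gamma(x_{0})$ with $D := d_{SR}(x_{0},y) \geq 1/n$, and a minimizing geodesic $\gamma : [0,1] \to M$ from $x_{0}$ to $y$.

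The main ingredient is the inequality
$$
d_{SR}^{2}(x_{0}, y) \;\leq\; d_{SR}^{2}(x_{0}, x') + d_{SR}^{2}(x', y) \qquad \text{for every } x' \in \mathcal{S},
$$
which I would obtain by combining the two equalities $\varphi^{c}(y) - \varphi(x_{0}) = d_{SR}^{2}(x_{0},y)$ and $\varphi^{c}(x') - \varphi(x') = 0$ (coming from $(x_{0},y), (x',x') \in \Gamma$) with the dual inequalities $\varphi^{c}(y) \leq \varphi(x') + d_{SR}^{2}(x',y)$ and $\varphi^{c}(x') \leq \varphi(x_{0}) + d_{SR}^{2}(x_{0},x')$. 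Applied at any $x' \in B_{SR}(\gamma(\lambda),r) \cap \mathcal{S}$, the triangle estimates $d_{SR}(x_{0},x') \leq \lambda D + r$ and $d_{SR}(x',y) \leq (1-\lambda)D + r$ force $r(D+r) \geq \lambda(1-\lambda)D^{2}$. For $r = \lambda D/2$ this fails as soon as $\lambda$ is small, so the ball $B_{SR}(\gamma(\lambda), \lambda D/2)$ is disjoint from $\mathcal{S}$. Since this forbidden ball lies inside $B_{SR}(x_{0}, 3\lambda D/2)$, the doubling property provides a positive lower bound, independent of $\lambda$, on its $\mathcal{L}^{4}$-fraction inside $B_{SR}(x_{0}, 3\lambda D/2)$. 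Letting $\lambda \to 0$ then contradicts the density of $\mathcal{S}$ at $x_{0}$.

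The main subtlety is measure-theoretic: density has to be understood with respect to the sub-Riemannian balls used in the Pythagoras bound, not with respect to Euclidean chart balls, and this is what requires invoking the Lebesgue differentiation theorem in the doubling metric measure space $(M, d_{SR}, \mathcal{L}^{4})$. Apart from that, the proof is essentially a rigidity consequence of the $c$-cyclical monotonicity of $\Gamma$: a static point cannot serve as the source of a nontrivial optimal geodesic, because the midway points along such a geodesic would carry a ball disjoint from $\mathcal{S}$, incompatible with the density of $\mathcal{S}$ at $x_{0}$.
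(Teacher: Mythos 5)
Your proof is correct and follows a genuinely different route from the paper's. The paper proves the lemma through a first-order optimality argument: $\varphi$ is Lipschitz for $d_{SR}$, so by the Pansu--Rademacher theorem it has horizontal derivatives $X^{i}\varphi$ at $\mu$-a.e.\ $x$; the two-sided quadratic bound $\varphi(x) - \varphi(\gamma^{x}_{i}(t)) \leq t^{2}$ at a static point forces $X^{i}\varphi(x) = 0$ for all $i$; and comparing the resulting expansion $\varphi(z)-\varphi(x) = o(d_{SR}(x,z))$ with the inequality $\varphi(x) - \varphi(z) \leq d_{SR}^{2}(z,y) - d_{SR}^{2}(x,y)$ along a minimizing geodesic to a putative $y \neq x$ leaves a linear term $2t\, d_{SR}^{2}(x,y)$ that cannot be $o(t)$. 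You instead package the duality into a Pythagoras inequality expressing the $c$-cyclical monotonicity of $\Gamma$ at static points, and combine it with a Lebesgue density argument in the spirit of Cavalletti--Huesmann, which the paper invokes for $\mathcal{M}^{S}$ in Section~\ref{SECsingular} but not for $\mathcal{S}$. What you gain is a derivative-free, purely metric-measure argument; what you pay is the need for the local doubling property of $(M, d_{SR}, \mathcal{L}^{4})$, both to run the Lebesgue differentiation theorem over sub-Riemannian balls and to bound from below the $\mathcal{L}^{4}$-fraction of $B_{SR}\bigl(\gamma(\lambda),\lambda D/2\bigr)$ inside $B_{SR}\bigl(x_{0},3\lambda D/2\bigr)$ uniformly in $\lambda$. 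This doubling is indeed true on the compact set $B_{SR}(x_{0},L/2)$, but it deserves a citation rather than a bare assertion: it follows from the Nagel--Stein--Wainger ball-volume estimate for smooth bracket-generating vector fields, and in particular it does not require equiregularity, which is a relevant caveat here since under hypothesis~(\ref{ASS}) the third-step growth may degenerate on the measure-zero set $\mathcal{H}^{c}$, so the structure is in general not equiregular.
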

\vspace*{0.5cm} 

We need now to show that almost every moving point is sent to a singleton. To this aim, we need to distinguish between two types of moving points. 
\begin{defi}\label{def3}
Let $T>0$. For every $x\in \mathcal{M}$, we set
$$
\Gamma^{S}(x):= \Bigl \{ y \in \Gamma(x) \, \vert \, \exists \gamma\in \Omega^{S}_{x, T}, \gamma(T)= y\Bigr \}
$$
and
$$ 
\Gamma^{R}(x):= \Bigl \{ y \in \Gamma(x) \, \vert \, \exists \gamma\in \Omega^{R}_{x, T}, \gamma(T)= y \Bigr \}.
$$  
Moreover, we let
$$
\mathcal{M}^{S}:= \left\{ x\in \mathcal{M}; \ \Gamma^{S}(x)\neq \emptyset \right\} \quad \mbox{and} \quad \mathcal{M}^{R}:= \left\{ x\in \mathcal{M}; \ \Gamma^{R}(x)\neq \emptyset \right\}.
$$
\end{defi}
\vspace*{0.5cm} 

Note that, by construction, for every $x\in \mathcal{M}$, $\Gamma(x) = \Gamma^{R}(x)\cup \Gamma^{S}(x)$. Furthermore, if there are no non-trivial singular minimizing curves then $\mathcal{M}^{S}= \emptyset$.
\vspace*{0.3cm}

First, using techniques reminiscent to the previous works by Agrachev-Lee~\cite{AL09} and Figalli-Rifford~\cite{FR10}, we prove that
\begin{pro}\label{2}
For $\mathcal{L}^{4}$-a.e. $x\in \mathcal{M}^{R}$, $\Gamma^{R}(x)$ is a singleton.
\end{pro}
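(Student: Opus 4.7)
The plan is to adapt the strategy of Figalli--Rifford~\cite{FR10}: show that the Kantorovich potential $\varphi$ is locally Lipschitz near every point of $\mathcal{M}^{R}$, hence $\mathcal{L}^{4}$-a.e.\ differentiable there; then use the fact that a regular minimizing geodesic from $x$ to $y\in \Gamma^{R}(x)$ is a normal extremal whose initial covector must coincide with $d\varphi(x)$ at any differentiability point $x$ of $\varphi$. Since the endpoint of a normal extremal is recovered from its initial covector via the sub-Riemannian exponential map, this forces $\Gamma^{R}(x)=\{y\}$.

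The first key step is to show that for every $x\in \mathcal{M}^{R}$ and every $y\in \Gamma^{R}(x)$, the function $z\mapsto d_{SR}^{2}(z,y)$ is locally semiconcave at $x$. Fix a regular minimizing control $u\in L^{2}([0,T],\R^{k})$ for $E^{x}$ with $E^{x}(u)=y$. By definition $dE^{x}(u)$ is surjective onto $T_{y}M$, so the implicit function theorem produces a neighborhood $V$ of $x$ and a smooth family $\{u_{z}\}_{z\in V}$ with $u_{x}=u$ and $E^{z}(u_{z})=y$ satisfying
$$
\|u_{z}\|_{L^{2}}^{2} - \|u\|_{L^{2}}^{2} \leq C\, d_{SR}(x,z) + O\bigl(d_{SR}(x,z)^{2}\bigr),
$$
where $C$ depends on a right-inverse of $dE^{x}(u)$. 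Consequently $d_{SR}^{2}(z,y)\leq d_{SR}^{2}(x,y)+C\, d_{SR}(x,z)+O(d_{SR}(x,z)^{2})$ on $V$. Combined with the dual inequality $\varphi(z)\geq \varphi^{c}(y)-d_{SR}^{2}(z,y)$, which is saturated at $z=x$, this produces a one-sided Lipschitz bound for $\varphi$ near $x$; the $c$-convexity of $\varphi$ then upgrades this to a genuine local Lipschitz bound on a neighborhood of $x$.

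Next, I would stratify $\mathcal{M}^{R}$ by a quantitative regularity parameter---say, a lower bound on the smallest singular value of $dE^{x}(u)$ over minimizing controls with $\|u\|_{L^{2}}\leq R$---to obtain a countable open cover of $\mathcal{M}^{R}$ on which $\varphi$ is Lipschitz with uniform constant. Rademacher's theorem then yields differentiability of $\varphi$ at $\mathcal{L}^{4}$-a.e.\ point of $\mathcal{M}^{R}$. At such a point $x$, any $y\in \Gamma^{R}(x)$ is the time-$T$ endpoint of a normal extremal with initial covector $p_{0}\in T_{x}^{*}M$ (regular minimizing geodesics being normal by the Pontryagin maximum principle applied to the minimization of $\|u\|_{L^{2}}^{2}$), and first-order optimality at $z=x$ of the map $z\mapsto \varphi(z)+d_{SR}^{2}(z,y)$ forces $p_{0}$ to be determined by $d\varphi(x)$ alone, independently of the choice of $y\in \Gamma^{R}(x)$. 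Uniqueness of solutions to the normal Hamiltonian flow then gives $\Gamma^{R}(x)=\{\exp^{SR}_{x}(p_{0})\}$.

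The main obstacle I anticipate is the first step: securing the semiconcavity-type estimate above \emph{uniformly} on pieces of $\mathcal{M}^{R}$, because the constant $C$ degenerates as a regular minimizing control approaches a singular one. Setting up a Borel-measurable stratification of $\mathcal{M}^{R}$ into sets on which $C$ is bounded, so that Rademacher's theorem can be applied on each stratum, requires careful bookkeeping; it relies on the compactness of admissible minimizing controls, which in turn follows from completeness of the sub-Riemannian structure together with the uniform bound $d_{SR}(x,y)\leq L$ on the working compact set $B_{SR}(x_{0},L/2)$ singled out in Section~\ref{strategy}.
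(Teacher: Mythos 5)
Your broad strategy is the right one and matches the paper's: use the openness of the endpoint map along a regular minimizing control to produce a $C^{2}$ function touching $d_{SR}^{2}(\cdot,\bar{y})$ from above at $\bar{x}$, feed this into the Kantorovich inequality $\varphi(z)\geq \varphi^{c}(\bar{y})-d_{SR}^{2}(z,\bar{y})$ (equality at $z=\bar{x}$) to get regularity of $\varphi$ at $\bar{x}$, then argue that differentiability of $\varphi$ at $\bar{x}$ pins down the initial covector of any normal minimizing extremal to a point of $\Gamma^{R}(\bar{x})$, hence $\Gamma^{R}(\bar{x})$ is a singleton. That last step is fine.

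The gap is in the step ``the $c$-convexity of $\varphi$ then upgrades this to a genuine local Lipschitz bound on a neighborhood of $x$.'' What you actually get from the endpoint-map estimate is a one-sided bound: a $C^{2}$ support function lying \emph{under} the graph of $\varphi$ at the single point $\bar{x}$ (semiconvexity of $\varphi$ at $\bar{x}$). To get an upper Lipschitz bound for $\varphi(z)-\varphi(\bar{x})$ on a neighborhood, you would need that for every $z$ near $\bar{x}$ the maximizer $y_{z}$ in $\varphi(z)=\sup_{y}\{\varphi^{c}(y)-d_{SR}^{2}(z,y)\}$ is joined to $z$ by a \emph{regular} minimizer with quantitatively controlled inverse of the endpoint differential. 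Nothing guarantees this: those $z$ need not lie in $\mathcal{M}^{R}$, and even when they do, the relevant minimizer may be singular. This is exactly the difficulty created by coexisting singular minimizers, and it is why the Figalli--Rifford Lipschitz/semiconcavity argument does not transfer verbatim. Relatedly, your stratification yields Borel strata, not open ones (the regularity parameter is not lower semicontinuous across the singular locus), so you cannot invoke Rademacher on them directly.

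The paper avoids the unjustified Lipschitz claim by a substitution trick. It records, on a stratum $W_{k}$, only the pointwise lower-paraboloid property of $\varphi$ (membership in $W_{k}$), takes a density point $\bar{x}$ of $A\cap W_{k}$, and then builds an auxiliary function
$\tilde{\varphi}(z)=\sup_{y\in\tilde{A}_{k}}\bigl\{\varphi(y)+\langle p_{y},y-z\rangle-k|y-z|^{2}\bigr\}$
on the \emph{full} ball $B(\bar{x},1/2k)$. Being a supremum of uniformly controlled parabolas, $\tilde{\varphi}$ is locally semiconvex on the whole ball, hence differentiable a.e. there by Rademacher; and it agrees with $\varphi$ on the positive-measure set $\tilde{A}_{k}$. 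The touching argument $\tilde{\rho}^{\bar{x}}\leq d_{SR}^{2}(\cdot,\bar{y})\leq \phi^{\bar{x},\bar{y}}$ with equality at the density point $\bar{x}$ then forces $d_{\bar{x}}\phi^{\bar{x},\bar{y}}=d_{\bar{x}}\tilde{\rho}^{\bar{x}}$, so $\bar{y}=\exp_{\bar{x}}(-d_{\bar{x}}\tilde{\varphi})$ is uniquely determined. This is the missing ingredient in your proposal: you need a genuinely semiconvex (hence Lipschitz) function defined on an open set that represents $\varphi$ on a positive-measure subset of $\mathcal{M}^{R}$, not Lipschitzness of $\varphi$ itself.
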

\vspace*{0.3cm}

Then, using a localized contraction property for singular curves which holds thanks to  (\ref{ASS}), the technique developed by  Cavalletti and Huesmann~\cite{CH15} allows to show that 
\begin{pro}\label{3}
For $\mathcal{L}^{4}$-a.e. $x\in \mathcal{M}^{S}$, $\Gamma^{S}(x)$ is a singleton.
\end{pro}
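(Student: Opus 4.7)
The plan is to exploit the constant-rank assumption (\ref{ASS}) to organize the singular minimizing curves into a one-dimensional foliation, and then reduce the uniqueness question on $\mathcal{M}^S$ to a family of one-dimensional Monge problems in the spirit of Cavalletti--Huesmann \cite{CH15}. Under (\ref{ASS}), the hypothesis ``rank $2$ in dimension $4$ with $\Delta + [\Delta,\Delta]$ of constant rank $3$'' places us exactly in the setting of Sussmann's theorem quoted in the introduction: up to a closed subset of Lebesgue measure zero, nontrivial singular horizontal paths are orbits of an analytic vector field $Z$ on $M$, whose trajectories form a one-dimensional analytic foliation $\mathcal{F}$. Since any $y \in \Gamma^S(x)$ is reached from $x$ by a singular minimizing geodesic, the whole set $\Gamma^S(x)$ is contained in the leaf $\mathcal{F}_x$ of $\mathcal{F}$ through $x$, so the candidate matchings for $x$ already live in a one-dimensional object.

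I would then disintegrate $\mu$ along $\mathcal{F}$. Working in local analytic coordinates $(q,s)$ that straighten the flow of $Z$, the absolute continuity $\mu \ll \mathcal{L}^4$ together with Fubini produces conditional measures $\mu^{\mathcal{F}}_x$ which are, for $\mu$-almost every $x$, absolutely continuous with respect to arc-length on the leaf $\mathcal{F}_x$. This one-dimensional absolute continuity along leaves is what I read as the ``localized contraction property for singular curves'' announced in Section \ref{strategy}, and it is precisely the measure-theoretic input required to run a one-dimensional Monge argument.

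Once the disintegration is in place, I would restrict the optimal plan $\alpha$ to the set where transport is of singular type and disintegrate it compatibly with $\mathcal{F}$, obtaining a family of plans $\alpha_\ell$ concentrated on $\ell \times \ell$, each $c$-cyclically monotone with first marginal a positive multiple of $\mu^{\mathcal{F}}_\ell$. Parametrizing $\ell$ by arc length $s$ and using that along a singular minimizer the sub-Riemannian distance reduces locally to $|s_1 - s_2|$, the cost becomes $(s_1 - s_2)^2$, and $\alpha_\ell$ is a one-dimensional $c$-cyclically monotone plan with absolutely continuous source. The classical one-dimensional result then forces $\alpha_\ell$ to be supported on a graph, so $\Gamma^S(x) \cap \mathcal{F}_x$ is a singleton for $\mu^{\mathcal{F}}_\ell$-a.e. $x$; integrating in the leaf parameter concludes.

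The main obstacle I anticipate, and what will likely justify the length of Sections 2--3, is turning this geometric picture into a rigorous measurable statement: one must deal with the exceptional set where Sussmann's regularization fails, produce a global, measurable description of $\mathcal{F}$ with a coherent choice of leaf parametrizations, and verify that the localized contraction along $Z$ really yields one-dimensional absolute continuity of almost every conditional. Checking that $c$-cyclical monotonicity genuinely descends to each leaf-plan $\alpha_\ell$, and that the sub-Riemannian cost restricted to a singular minimizer does coincide with the squared difference of arc-length parameters, are further delicate points where analyticity and the assumption (\ref{ASS}) are crucially used.
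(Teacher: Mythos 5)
Your route is genuinely different from the paper's, and one of your reading-between-the-lines guesses about its mechanism is off. You correctly identify the shared geometric starting point: by Sussmann's theorem (Lemmas~\ref{5}--\ref{7} in the paper), outside a closed measure-zero set $\mathcal{H}^{c}$ the singular curves are orbits of an explicit analytic vector field $X$. But the paper never disintegrates $\mu$ along the resulting one-dimensional foliation. The ``localized contraction property'' is not one-dimensional absolute continuity of conditionals; it is a divergence lower bound $\operatorname{div}_{x}X \geq -C\,|X(x)|$ (Lemma~\ref{7i}), which, via the flow/divergence formula (Lemma~\ref{8}), gives the volume non-degeneracy estimate $\mathcal{L}^{4}(A^{S}_{t}) \geq e^{-C\,l(A,t)}\mathcal{L}^{4}(A)$ (Lemma~\ref{9}). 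This is fed into a Cavalletti--Huesmann style dichotomy (Lemma~\ref{11}): if $\Lambda_{1},\Lambda_{2}\subset\Gamma$ have the same first projection $A \subset \mathcal{M}^{S}$ but disjoint second projections, then flowing $A$ by $X$ for a short time yields two \emph{disjoint} images each of measure close to $\mathcal{L}^{4}(A)$, forcing $\mathcal{L}^{4}(A)=0$. A covering of $M$ by disjoint sets of small diameter then converts ``$\Gamma^{S}(x)$ is not a singleton on a positive-measure set'' into exactly such a pair $\Lambda_{1},\Lambda_{2}$, completing the proof without any one-dimensional Monge argument or disintegration.

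Compared to this, your disintegration-plus-1D route has two concrete gaps you would need to close. First, you work with a fixed optimal plan $\alpha$ restricted to its singular part, and the classical one-dimensional theory would only give you that this restricted plan is graph-like; Proposition~\ref{3} is the stronger statement that the set $\Gamma^{S}(x)$ (defined from the Kantorovitch potentials, not from any particular $\alpha$) is a singleton for a.e.\ $x$. The paper's Lemma~\ref{11} is deliberately phrased in terms of arbitrary subsets of $\Gamma$ to get around this. Second, the disintegration must be global and measurable, yet the vector field $X$ vanishes precisely on the exceptional set $\mathcal{H}^{c}$, so the leaves degenerate there and a coherent global leaf parametrization is not automatic; the paper sidesteps this by showing directly, via the same contraction estimate, that the set of sources whose singular geodesics hit $\mathcal{H}^{c}$ is Lebesgue negligible (Lemma~\ref{10}). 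Finally, your reduction of $d_{SR}$ to $|s_{1}-s_{2}|$ on a leaf uses that sub-arcs of a minimizing singular geodesic remain minimizing (true), but you would still have to verify the regularity of this identification across leaves to invoke the one-dimensional theory uniformly. None of these are obviously fatal, but they are real work that the paper's direct flow-contraction argument avoids entirely.
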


It remains to show that for almost every $x\in M$, $\Gamma (x)$ is a singleton. Again this will follow from a local contraction property together with the approach of Cavalletti and Huesmann~\cite{CH15}, see Section \ref{SECend}.

\section{Proof of Proposition \ref{1}}\label{SECregular}
Argue by contradiction, by assuming that there is a compact set $A\subset \mathcal{M}^{R}$ of positive Lebesgue measure such that
\begin{equation}\label{assumption}\forall x\in A,\ \Gamma^{R}(x)\ \text{is not a singleton.}\end{equation}

We may assume that $A$ is contained in a  chart $(\mathcal{V}, \phi_{\mathcal{V}})$ of $M$. Without loss of generality, we may assume that $\mathcal{V}$ is an open subset of $\R^{4}$ where we can use the local set of coordinates $(x_{1}, x_{2}, x_{3}, x_{4})$.\\

\vspace*{0.7cm}

For every $k \in \N$, we define the set 
\begin{multline}
W_{k}:=\Bigl\{ x\in \mathcal{M} \, \vert \, \exists p_{x}\in \R^{4}; |p_{x}|\leq k\ \text{and}\\ \varphi(x) \leq \varphi(z)- <p_{x}, x-z> + k\ |x-z|^{2}, \forall z \in \bar{B}(x, 1/k) \Bigr\},
\end{multline}
where $\bar{B}(x,1/k)$ denotes the closed ball in $\R^{4}$ centered at $x$ with radius $1/k$.\\
The set $W_{k}$ is well-defined, up to a change of coordinates, for $k$ large enough.
\vspace*{0.5cm}

\begin{lem}\label{4}
$\displaystyle{\mathcal{M}^{R} \subset \bigcup_{k\in \N} W_{k}}.$
\end{lem}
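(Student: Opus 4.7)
The plan is to show that each $x \in \mathcal{M}^R$ admits a proximal subgradient of $\varphi$ with controlled norm and modulus, and then to choose $k \in \N$ large enough to witness membership in $W_k$. The key ingredient is the standard fact from sub-Riemannian regularity theory that $d_{SR}^{2}(\cdot, y)$ is locally semiconcave at any point $x$ from which $y$ can be reached by a regular minimizing geodesic.

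Fix $x \in \mathcal{M}^R$. By definition of $\mathcal{M}^R$, there is some $y \in \Gamma^R(x)$ together with a regular minimizing geodesic $\gamma : [0,T] \to M$ joining $x$ to $y$. Let $u$ be an associated control, which by hypothesis is a regular point of the endpoint map $E^x$. Using the surjectivity of $dE^x(u)$ (together with an implicit-function / inverse-mapping argument applied to the endpoint map based at $y$ and run backward in time), one can produce, for every $z$ in a small neighborhood of $x$, an explicit horizontal curve joining $z$ to $y$ whose length admits a quadratic expansion in $z-x$. After squaring, this yields constants $p_x^{*} \in \R^{4}$, $C > 0$ and $r > 0$ such that
$$
c(z, y) \;\leq\; c(x, y) + \langle p_x^{*}, z - x \rangle + C \, |z - x|^{2}, \qquad \forall z \in \bar{B}(x, r),
$$
where $p_x^{*}$ can be identified with the initial covector of the normal extremal lift of $\gamma$.

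Since $(x,y)$ belongs to the contact set $\Gamma$ and $(\varphi, \varphi^{c})$ solves the dual problem, equation (\ref{kanto1}) gives $\varphi(z) \geq \varphi^{c}(y) - c(z,y) = \varphi(x) + c(x,y) - c(z,y)$ for every $z \in M$. Plugging in the semiconcavity estimate, we obtain for every $z \in \bar{B}(x, r)$
$$
\varphi(z) \;\geq\; \varphi(x) - \langle p_x^{*}, z - x \rangle - C \, |z-x|^{2} \;=\; \varphi(x) + \langle p_x^{*}, x - z \rangle - C \, |x - z|^{2}.
$$
Rearranging yields precisely
$$
\varphi(x) \;\leq\; \varphi(z) - \langle p_x^{*}, x - z \rangle + C \, |x - z|^{2}, \qquad \forall z \in \bar{B}(x, r).
$$
Setting $p_x := p_x^{*}$ and choosing any integer $k \geq \max\{ |p_x|, C, 1/r \}$ then places $x$ into $W_{k}$. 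Since $x$ was arbitrary in $\mathcal{M}^R$, this proves $\mathcal{M}^R \subset \bigcup_{k\in \N} W_{k}$.

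The only non-trivial step is the semiconcavity estimate for $c(\cdot, y)$ at the regular point $x$; this is the main obstacle, but it is classical in the sub-Riemannian setting and can be invoked from~\cite{Rif14} (or from~\cite{AL09, FR10}), which is the reason why the analogous proposition in Figalli--Rifford is restricted to points joined by regular minimizing geodesics. Everything else is the algebraic combination of the contact-set inequality with this estimate, together with the discretization in $k$ needed to absorb the implicit constants $|p_x|$, $C$ and $r$ into a single integer threshold.
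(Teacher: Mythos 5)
Your proof is correct and follows essentially the same route as the paper: both exploit regularity of the endpoint map via an inverse-function-theorem argument to produce a $C^{2}$ upper bound for $d_{SR}^{2}(\cdot,\bar{y})$ touching at $\bar{x}$ (equivalently, local semiconcavity of $c(\cdot,\bar{y})$ at regular points), then combine this with the Kantorovitch inequality $\varphi(z)\geq \varphi^{c}(\bar{y})-c(z,\bar{y})$ and the contact identity at $(\bar{x},\bar{y})$ to place a quadratic support function under $\varphi$, and finally choose $k$ to absorb the constants. The only difference is presentational: the paper spells out the construction of the $C^{2}$ support function $\phi^{\bar{x},\bar{y}}(z)=\|\bar{u}+\sum_i(\mathcal{G}(z))_i v^i\|_{L^2}^2$ explicitly, whereas you invoke the semiconcavity estimate as a known consequence of the same mechanism, citing~\cite{Rif14}.
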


\begin{proof}[\textbf{Proof of Lemma \ref{4}}]
Let $\bar{x} \in \mathcal{M}^{R}$, then there are $\bar{y}\in \Gamma^{R}(\bar{x})$ and\\ $\bar{\gamma}: [0, 1]\rightarrow M$ a regular horizontal path steering $\bar{y}$ to $\bar{x}$. There exist an open neighborhood $\mathcal{V}$ of $\bar{\gamma}([0,1])$ and an orthonormal family (with respect to $g$) $\mathcal{F}$ of two  vector fields $X^{1}, X^{2}$ such that 

$$ \Delta(z)= Span\Bigl \{ X^{1}(z), X^{2}(z)\Bigr \}, \ \forall z \in \mathcal{V}.$$

According to a change of coordinates if necessary, we can assume that $\mathcal{V}$ is an open subset of $\R^{4}$. Moreover, there is a control $ \bar{u} \in L^{2}([0,1], \R^{2})$ associated to $\bar{\gamma}$, ie.
$$
\dot{\bar{\gamma}}(t)= \sum_{i=1}^{2} \bar{u}_{i}(t) X^{i}(\bar{\gamma} (t)), \ \forall t \in [0,1].
$$
We recall that the set of minimizing geodesics between $\bar{x}$ and $\bar{y}$ is compact with respect to the uniform topology: if $(y_{k})_{k}$ is a sequence converging uniformly to $y$ then, the sequence $(\gamma_{k})_{k}$ of minimizing geodesics joining $x$ to $y_{k}$ converges uniformly to $\bar{\gamma}$ and the sequence $(u_{k})_{k}$ of controls associated to $(\gamma_{k})_{k}$ converges uniformly to $\bar{u}$ in $ L^{2}([0,1], \R^{m})$. Then, there exists an open neighborhood $\mathcal{O}_{\bar{x}}$ of $\bar{x}$ such that $\forall z \in \mathcal{O}_{\bar{x}}$, every minimizing geodesic joining $\bar{y}$ to $z$ is contained in $\mathcal{V}$.\\
Since $\bar{\gamma}$ is regular, there exist $v^{1}, v^{2}, v^{3}, v^{4} \in L^{2}([0,1], \R^{2})$ such that the linear operator \begin{equation}\label{*}
\begin{array}{lcl}
\R^{4}& \rightarrow & \R^{4}\\
\alpha & \mapsto & \displaystyle{\sum_{i=1}^{4} \alpha_{i} D_{\bar{u}}E^{\bar{y}}(v^{i})}
\end{array}
\end{equation}
is invertible.\\
Recall that $C^{\infty}([0,1],\R^{2})$ is dense in $L^{2}([0,1], \R^{2})$, we can assume that we have $v^{1}, v^{2}, v^{3}, v^{4} $ in $C^{\infty}([0,1],\R^{2})$.

Define locally $$\begin{array}{lccl}
\mathcal{F}:&\R^{4}& \rightarrow & \R^{4}\\
&\alpha &\mapsto &  \displaystyle{E^{\bar{y}}(\bar{u}+ \sum_{i=1}^{4} \alpha_{i} v^{i})}
\end{array}.$$\\
This mapping is well-defined and of class $C^{2}$ in the neighborhood of zero. It satisfies $\mathcal{F}(0)= \bar{x}$ and its differential at $0$ is invertible. \\
By the Local Inverse Function Theorem, there exist an open ball $\mathcal{B}$ of $\R^{4}$ centered at $\bar{x}$ and a function $\mathcal{G}: \mathcal{B}\rightarrow \R^{4}$ of class $C^{2}$ such that 

$$\mathcal{F}\circ \mathcal{G} (z)= z, \ \forall z \in \mathcal{B}.$$

 $$\forall z \in \mathcal{B},\ d^{2}_{SR}(z, \bar{y}) \leq || \bar{u}+ \sum_{i=1}^{n}(\mathcal{G}(z))_{i}v^{i} ||^{2}_{L^{2}}.$$
 \vspace*{0.5cm}
 
\begin{center}
\begin{tikzpicture}[scale=1]
\draw [thick] plot [smooth cycle] coordinates {(-2,-2)(-4,0)(-2,4)(4,3)(8,4) (10,3)(11,0)(9,-2)(5,-2.5) };
\draw (-3.5,3) node[below]{$\mathcal{V}$};
\draw (0,1) node {$\bullet$};
\draw (-0.5,1) node[above]{$\bar{x}$};
\draw (8.1,1) node {$\bullet$};
\draw (8.2,1) node[right]{$\bar{y}$};
\draw (0, 1) .. controls (1,0.5) and (8, 0.5) .. (8.1, 1);
\draw (5,0.7) node[above]{$\bar{\gamma}\leftrightarrow\bar{u}$};
\draw (-0.4,1.2) node[right]{$=E^{\bar{y}}(\bar{u})$};
\draw[thick, dashed] (0, 1) circle (1.5cm);
\draw (-1,3) node[below]{$\mathcal{B}$};
\draw (0.6,0) node {$\bullet$};
\draw (0.6,0) node[left]{$z$};
\draw [dashed](0.6, 0) .. controls (1.8,-0.5) and (8, 0) .. (8.1, 1);
\draw (5,-0.1) node[below]{$\displaystyle{\bar{u}+ \sum_{i=1}^{n}(\mathcal{G}(z))_{i}v^{i}}$};
\end{tikzpicture}
\end{center} 
\vspace*{0.5cm}

Define $\phi^{\bar{x},\bar{y}}(z):= ||\displaystyle{ \bar{u}+ \sum_{i=1}^{n}(\mathcal{G}(z))_{i}v^{i} }||^{2}_{L^{2}}$, $\forall z\in \mathcal{B}$. Then, we conclude that there is a $C^{2}$ function $\phi^{\bar{x},\bar{y}}: \mathcal{B}\rightarrow \R^{4}$ such that
$$\phi^{\bar{x},\bar{y}}(z) \geq d^{2}_{SR}(z, \bar{y}),\ \forall z\in \mathcal{B}\ \text{and}\ \phi^{\bar{x},\bar{y}}(\bar{x})=d^{2}_{SR}(\bar{x},\bar{y}).$$

Recall that, by the definition of the Kantorovitch potentials,  for every $z\in M$, we have 
$$\left \{\begin{array}{lcl}
\varphi (z) &\geq& \varphi^{c}(\bar{y}) -d^{2}_{SR}(z, \bar{y})\\
\varphi (\bar{x}) &= &\varphi^{c}(\bar{y}) -d^{2}_{SR}(\bar{x}, \bar{y})
\end{array}\right. .$$
Then, $\forall z\in \mathcal{B}$, $$\left \{\begin{array}{lcl}
\varphi (z) &\geq& \varphi^{c}(\bar{y}) -\phi^{\bar{x},\bar{y}}(z)\\
\varphi (\bar{x}) &= &\varphi^{c}(\bar{y}) -\phi^{\bar{x}, \bar{y}}(\bar{x})
\end{array}\right. .$$
Define $\psi^{\bar{x},\bar{y}}(z):=  \varphi^{c}(\bar{y}) -\phi^{\bar{x},\bar{y}}(z), \forall z\in \mathcal{B}$. Hence, we put locally a $C^{2}$ function under the graph of $\varphi$ with a uniform control on the $C^{2}$ norm of $\psi^{\bar{x},\bar{y}}$. Then, for $\bar{x}\in \mathcal{M}^{R}$, we can find $k\in \N$ such that there is $p_{\bar{x}}\in \R^{4}$ with $|p_{\bar{x}}|\leq k$ verifying
$$ \varphi(\bar{x}) \leq \varphi(y) - <p_{\bar{x}}, \bar{x}-y > + k\ |\bar{x}-y|^{2}, \ \forall y\in B(\bar{x},1/k).$$
\end{proof}
\vspace*{0.7cm}

We are ready to complete the proof of Proposition \ref{2}.\\

Since $\displaystyle{\mathcal{M}^{R}\subset \bigcup_{k\in \N} W_{k}}$ (by Lemma \ref{3}), there exists $k\in \N$ such that
$$A_{k}:= A\cap W_{k}\ \text{is of positive Lebesgue measure}.$$
Let $\bar{x}$ be a density point of $A_{k}$ and $\bar{y}\in \Gamma^{R}(\bar{x})$. By the definition of the Kantorovitch potentials, we have that 
$$\varphi(\bar{x}) + d_{SR}(\bar{x}, \bar{y})^{2} \leq \varphi(z) + d_{SR}(z, \bar{y})^{2}, \forall z\in M$$
$$\Rightarrow \varphi(\bar{x}) + d_{SR}(\bar{x}, \bar{y})^{2}- \varphi(z) \leq  d_{SR}(z, \bar{y})^{2}, \forall z\in M.$$
\vspace*{0.5cm}

We define the function $\begin{array}{lccl}
\rho^{\bar{x}}:&M& \rightarrow &\R\\
&z&\mapsto & \rho^{\bar{x}}(z):= \varphi(\bar{x}) + d_{SR}^{2}(\bar{x}, \bar{y})- \varphi(z)
\end{array}$ verifying 
\begin{equation}\label{s1}\rho^{\bar{x}}(z) \leq d_{SR}^{2}(z, \bar{y}), \forall z\in M \ \text{and equality for}\ z=\bar{x}.\end{equation}
\vspace*{0.3cm}

Let $\displaystyle{\tilde{A_{k}}:= A_{k}\cap B(\bar{x}, 1/2k)}$. For every $y\in \tilde{A}_{k}$, there is $p_{y}\in \R^{4}$, $|p_{y}|\leq k$ such that
$$\varphi(y)\leq \varphi(z) - <p_{y}, y-z> +k\ |y-z|^{2},\ \forall z\in B(y, 1/k).$$
\vspace*{0.3cm}

We define the function $\tilde{\varphi}: B(\bar{x},1/2k) \rightarrow \R$ as follows

$$\tilde{\varphi}(x) = \displaystyle{\sup_{y\in \tilde{A}_{k}} \Psi_{y}(x)}, \ \forall y\in B(\bar{x}, 1/2k)$$

where $$ \forall y\in \tilde{A}_{k},\ \Psi_{y}(x):= \varphi(y) + <p_{y}, y-x> - k\ |y-x|^{2}.$$
\vspace*{0.4cm}

We claim that for every $x\in \tilde{A}_{k}$, $\tilde{\varphi}(x)= \varphi(x)$. Let us prove our claim.\\
In fact, for every $x\in \tilde{A}_{k}$,we have 

$$\tilde{\varphi}(x) \geq \Psi_{y}(x),\ \forall y\in \tilde{A}_{k},$$

that is 

$$\tilde{\varphi}(x) \geq  \varphi(y) + <p_{y}, y-x> - k\ |y-x|^{2},\ \forall y\in \tilde{A}_{k} .$$

In particular, for $y=x\in\tilde{A}_{k}$, we obtain $$\varphi(x) \leq \tilde{\varphi}(x).$$

Assume that there is $x\in \tilde{A}_{k}$ such that $\varphi(x) < \tilde{ \varphi}(x)$.\\
Then, there is $y\in \tilde{A}_{k}$, $y\neq x$ such that 

$$ \varphi(x) < \ \Psi_{y}(x)$$

that is

\begin{equation}\label{i1}\varphi(x) <\  \varphi(y) + <p_{y}, y-x> - k\ |y-x|^{2} .\end{equation}\\
Or, $x, y\in \tilde{A}_{k}$, then $x\in B(y, 1/k)$. \\
So,
 $$\varphi(y) \leq \varphi(x) - <p_{y}, y-x> + k |x-y|^{2}$$

$$\Rightarrow  \varphi(y)+ <p_{y}, y-x>  - k\ |x-y|^{2}\leq  \varphi(x)$$

which contradicts inequality (\ref{i1}). And the conclusion follows.
\vspace*{0.7cm}

Moreover, let $y\in \tilde{A}_{k}$ be fixed. There exists a neighborhood $B(y,1/k)$ of $y$ contained in $B(\bar{x},1/2k)$  
such that for every $x\in B(y,1/k)$, there is $\tilde{p}_{x}\in \R^{4}$ such that $\forall x' \in B(y, 1/k)$, we have

$$\begin{array}{lcl}
\vspace*{0.3cm}

\Psi_{y}(x)-\Psi_{y}(x')&=& <p_{y}, x'-x> + k (|x'-y|^{2}- |x-y|^{2})\\
\vspace*{0.3cm}

&\leq& <p_{y}, x'-x> + k |x'-x|^{2} - 2k <y-x, x'-x>\\
\vspace*{0.3cm}

&\leq& <p_{y}-2k(y-x), x'-x> + k |x'-x|^{2} \\
\end{array}$$

Take $\tilde{p}_{x}:= p_{y}-2k(y-x)$, we obtain 
$$ \Psi_{y}(x)\leq \Psi_{y}(x') - <p_{y}-2k(y-x), x'-x> + k |x'-x|^{2}.$$

\vspace*{0.3cm}

This means that for every $y\in \tilde{A}_{k}$, $\Psi_{y}$ is locally semiconvex on $B(\bar{x}, 1/2k)$. According to Lemma \ref{lemap2} in Appendix B, since $\tilde{\varphi}$ is the supremum of local semiconvex functions $\Psi_{y}$ among all $y\in \tilde{A}_{k}$, then $\tilde{\varphi}$ is locally semiconvex on $B(\bar{x}, 1/2k)$. By the Rademacher Theorem, $\tilde{\varphi}$ is differentiable almost everywhere on $B(\bar{x}, 1/2k)$.\\

We also define the function 
$$\begin{array}{lccl}\tilde{\rho}^{\bar{x}}:&B(\bar{x}, 1/2k)& \rightarrow &\R\\ &z&\mapsto & \tilde{\rho}^{\bar{x}}(z):= \tilde{\varphi}(\bar{x}) + d_{SR}^{2}(\bar{x},\bar{y})- \tilde{\varphi}(z) \end{array}$$

 such that 
 \begin{equation}\label{s2}\tilde{\rho}^{\bar{x}}= \rho^{\bar{x}} \ \text{on}\ \tilde{A}_{k}.\end{equation}

Here, $\bar{x}$ is fixed and $\tilde{\rho}^{\bar{x}}$ is a function of $z$. By the definition of $\tilde{\rho}^{\bar{x}}$, as $\tilde{\varphi}$ is differentiable at almost every $z\in B(\bar{x}, 1/2k)$, $\tilde{\rho}^{\bar{x}}$ is also differentiable almost everywhere on $B(\bar{x}, 1/2k)$.
\vspace*{0.7cm}

On the other hand, following the proof of Lemma \ref{4}, for $\bar{x}\in \mathcal{M}^{R}$ and $\bar{y} \in \Gamma^{R}(\bar{x})$, there are an open set $\mathcal{B}_{\bar{x}}$ in $\R^{4}$ containing $\bar{x}$ and a $C^{2}$ function $\phi^{\bar{x},\bar{y}}: \mathcal{B}_{\bar{x}} \rightarrow \R$ such that

\begin{equation}\label{s3}\phi^{\bar{x},\bar{y}}(z) \geq d_{SR}^{2}(z,\bar{y}), \forall z\in \mathcal{B}_{\bar{x}}\ \text{and equality for}\ z=\bar{x}.\end{equation}
\vspace*{0.1cm}

Consequently, by (\ref{s1}), (\ref{s2}), (\ref{s3}), we obtain 

$$\tilde{\rho}^{\bar{x}}(z) \leq d_{SR}^{2}(z,\bar{y}) \leq \phi^{\bar{x},\bar{y}}(z),\ \forall z\in \mathcal{B}_{\bar{x}}\cap \tilde{A}_{k}$$ and 
$$\text{equality for}\ z=\bar{x}.$$
\vspace*{0.1cm}

Note that $\phi^{\bar{x},\bar{y}}$ is a $C^{2}$ function and $\tilde{\rho}^{\bar{x}}$ is differentiable almost everywhere on $B(\bar{x}, 1/2k)$. Then,
$$d_{\bar{x}}\phi^{\bar{x},\bar{y}}= d_{\bar{x}}\tilde{\rho}^{\bar{x}}.$$
\vspace*{0.1cm}

It means that there is a unique $\bar{y}\in \Gamma^{R}(\bar{x})$ such that
$$\bar{y} = exp_{\bar{x}}(d_{\bar{x}}\tilde{\rho}^{\bar{x}}) = exp_{\bar{x}}(-d_{\bar{x}}\tilde{\varphi}),$$
with $exp_{\bar{x}}: T^{*}_{\bar{x}}M \rightarrow M$ the sub-Riemannian exponential map from $\bar{x}$. This contradicts assumption (\ref{assumption}) and the conclusion follows.
\vspace*{0.5cm}

\begin{rmk}
The above argument can be used to prove the required result in the general case, with $M$ a smooth connected manifold of dimension $n$ equipped with a complete sub-Riemannian structure $(\Delta, g)$ of rank $m (m<n)$.
\end{rmk}

\section{Proof of Proposition \ref{3}}\label{SECsingular}

Our aim is to prove that
\begin{center}for almost every $x\in \mathcal{M}^{S}$, $\Gamma^{S}(x)$ is a singleton.\end{center}

First, we need to construct a line field, defined on a set of full Lebesgue measure, whose orbits correspond to the singular curves. 
\vspace*{0.7cm}

The following holds (see ~\cite{Sus96}, ~\cite{Rif14},  ~\cite{LS95}) :
\begin{lem}\label{5}
There is an open set $\mathcal{H}$ of full Lebesgue measure on $M$ such that:\\
\begin{equation}\label{EQ2}
\forall x\in \mathcal{H}, \ T_{x}M =  \Delta(x) + [\Delta, \Delta](x) + [\Delta, [\Delta, \Delta]](x).
\end{equation}
\end{lem}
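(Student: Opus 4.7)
The plan is to take $\mathcal{H}$ to be the set of $x\in M$ where the equality in (\ref{EQ2}) already holds, and to verify directly that it is open and that its complement has Lebesgue measure zero, exploiting analyticity together with the rank-$3$ hypothesis (\ref{ASS}).

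Working in an analytic chart, I choose an analytic orthonormal frame $(X^{1},X^{2})$ of $\Delta$ and put $Y:=[X^{1},X^{2}]$, $Z^{1}:=[X^{1},Y]$, $Z^{2}:=[X^{2},Y]$. Under (\ref{ASS}) the three vectors $X^{1}(x),X^{2}(x),Y(x)$ are linearly independent at every point and span $\Delta(x)+[\Delta,\Delta](x)$, so the pointwise identity
$$
\Delta(x)+[\Delta,\Delta](x)+[\Delta,[\Delta,\Delta]](x)=\mbox{Span}\bigl\{X^{1}(x),X^{2}(x),Y(x),Z^{1}(x),Z^{2}(x)\bigr\}
$$
shows that $\mathcal{H}$ coincides with the locus where at least one $4\times 4$ minor of the matrix built from these five vectors is nonzero. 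Since this is an open condition (the dimension of a family of vectors is lower semicontinuous), $\mathcal{H}$ is open.

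For the measure statement, let $S:=M\setminus\mathcal{H}$. In each analytic chart $S$ is the simultaneous zero set of the analytic $4\times 4$ minors above, hence a real analytic subset of $M$; since a proper analytic subset of a connected real analytic manifold has Lebesgue measure zero, it suffices to show $S\neq M$. Argue by contradiction and suppose $S=M$. Then $Z^{1}(x),Z^{2}(x)\in \mbox{Span}\{X^{1}(x),X^{2}(x),Y(x)\}$ for every $x$, and since the three brackets $[X^{1},X^{2}],\,[X^{1},Y],\,[X^{2},Y]$ of the local generators $\{X^{1},X^{2},Y\}$ of the rank-$3$ distribution $\Delta+[\Delta,\Delta]$ all lie in that distribution, $\Delta+[\Delta,\Delta]$ is involutive. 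The analytic Frobenius theorem then produces a foliation of $M$ by $3$-dimensional leaves; since $\Delta\subseteq \Delta+[\Delta,\Delta]$, every $\Delta$-horizontal curve stays in a single leaf, so the $\Delta$-orbit of any point has dimension at most $3$. This contradicts the total nonholonomy of $\Delta$, which, via the analytic orbit theorem of Sussmann-Nagano, forces the orbit of each point to be the whole connected $4$-dimensional manifold $M$. Hence $S\neq M$, which concludes the proof.

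The only genuinely nontrivial step is the involutivity/Frobenius/orbit argument used to rule out $S=M$; the openness, the analytic structure of $S$, and the measure-zero property of proper analytic subsets are classical facts that require no delicate input beyond analyticity of the frame.
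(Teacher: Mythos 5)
Your proof is correct, and it takes a genuinely different route from the paper. The paper works purely locally: it normalizes a frame so that iterated brackets $X^{I}$ (of $X^{1},X^{2},X^{3}=[X^{1},X^{2}]$) are of the form $g_{I}\,\partial_{x_{4}}$, stratifies the bad set $\mathscr{S}$ by the nested sets $\mathcal{A}_{k}=\{g_{I}=0,\ \mathrm{length}(I)\le k\}$, uses total nonholonomy to truncate the stratification at a finite step $r$, and then applies the implicit function theorem on each stratum $\mathcal{A}_{k}\setminus\mathcal{A}_{k+1}$ (using the algebraic relation $g_{J}=\partial_{x_{j_{1}}}g_{I}+\alpha_{j_{1}}\partial_{x_{4}}g_{I}$ valid on $\mathcal{A}_{k}$) to cover it by smooth hypersurfaces. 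This produces the stronger conclusion that $\mathscr{S}$ is $3$-rectifiable, and does not really use analyticity for this particular lemma -- it works in the smooth category on the compact region under consideration. Your argument instead packages the bad set $S$ as a proper real analytic subset of a connected real analytic manifold and invokes the general measure-zero fact for such sets; the key step is showing properness via Frobenius, namely that $S=M$ would force $\Delta+[\Delta,\Delta]$ to be an involutive rank-$3$ distribution, incompatible with total nonholonomy of $\Delta$. (One small remark: rather than going through Frobenius and the Sussmann--Nagano orbit theorem, you could observe directly that involutivity of $\Delta+[\Delta,\Delta]$ confines $\mbox{Lie}\{\Delta\}$ to a rank-$3$ distribution, which already contradicts the definition of total nonholonomy.) Your route is shorter and more conceptual, but leans on analyticity in an essential way and yields only measure zero rather than rectifiability; also, the passage from ``not all local minors vanish identically in some chart'' to ``they do not vanish identically in any chart'' deserves a sentence on analytic continuation across overlapping charts, which you implicitly assume.
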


\begin{proof}[\textbf{Proof of Lemma \ref{5}}]

We denote by $\mathscr{S}$ the set given by 

$$\mathscr{S} =\Bigl \{ x\in M | \Delta(x) + [\Delta, \Delta](x) +[\Delta(x),[\Delta, \Delta]](x) \neq T_{x}M \Bigr \}.$$

It is clear that $\mathscr{S}$ is a closed set on $M$ such that condition (\ref{EQ2}) is verified on its complementary set. Let us prove that $\mathscr{S} $ is of Lebesgue measure zero on $M$. For sake of simplicity, we will work locally. In other terms, given $\bar{x}\in M$, there are a local set of coordinates $(x_{1},x_{2},x_{3},x_{4})$  in an open neighborhood $\mathcal{V}$ of $\bar{x}$ and two vector fields $X^{1}, X^{2}$ linearly independent on $\mathcal{V}$ such that

 $$\Delta(x)= Span\Bigl \{ X^{1}(x), X^{2}(x)\Bigr \},\ \forall x\in \mathcal{V}.$$
 
By hypothesis (\ref{ASS}) in Theorem \ref{1}, we have

$$\forall x\in M, \Delta(x)+ [\Delta, \Delta](x) \ \text{has dimension}\ 3.$$
As a consequence, $\Delta + [\Delta, \Delta]$ is a totally nonholonomic distribution of rank $3$ in dimension $4$ with 
$$\Delta(x)+ [\Delta,\Delta](x) = Span\{X^{1}(x), X^{2}(x), X^{3}(x)\},\ \forall x\in \mathcal{V}$$
where $X^{3}= [X^{1}, X^{2}].$\\

According to a change of coordinates  if necessary, we can assume that 
$$ X^{i} =\displaystyle{ \frac{\partial}{\partial x_{i}} + \alpha_{i}(x) \frac{\partial }{\partial x_{4}} }, \forall i=1, 2, 3$$
where $\alpha_{i}: \mathcal{V} \rightarrow \R$, $\forall i=1, 2, 3$ are analytic functions.\\

Hence, $\forall i,j \in \{ 1, 2, 3\}$, we have 
$$\Bigl [X^{i}, X^{j}\Bigr ] = \displaystyle{\Bigl ((\frac{\partial \alpha_{j}}{\partial x_{i}} - \frac{\partial \alpha_{i}}{\partial x_{j}}) + (\frac{\partial \alpha_{j}}{\partial x_{4}} \alpha_{i} - \frac{\partial \alpha_{i}}{\partial x_{4}}\alpha_{j})\Bigr ) \frac{\partial}{\partial x_{4}} },
$$
and
$$ \mathscr{S}:=\Bigl \{ x\in \mathcal{V} |\ \Bigl (\frac{\partial \alpha_{j}}{\partial x_{i}} - \frac{\partial \alpha_{i}}{\partial x_{j}}\Bigr ) + \Bigl (\frac{\partial \alpha_{j}}{\partial x_{4}} \alpha_{i} - \frac{\partial \alpha_{i}}{\partial x_{4}}\alpha_{j}\Bigr )=0, \forall i,j =1, 2, 3 \Bigr \}.
$$
For every $ I=(i_{1},\dots, i_{k}) \in \{ 1, 2, 3\}$, we denote by $X^{I}$ the smooth vector field constructed by the Lie brackets of $X^{1}, X^{2}, X^{3}$ as follows
$$ X^{I} = \Bigl [X^{i_{1}}, [X^{i_{2}},\dots, [X^{i_{k-1}},X^{i_{k}}]\dots ] \Bigr ]. $$
Note  $length(I)$ the length of the Lie brackets $X^{I}$. Since $\Delta+ [\Delta, \Delta]$ is totally nonholonomic distribution, there exists a positive integer $r$ such that 
$$ T_{x}M= Span \Bigl \{ X^{I}(x) | length(I) \leq r\Bigr \}, \forall x\in \mathcal{V} .$$
For every $I$ of $length(I)\geq 2$, there exists a function $g_{I}: \mathcal{V}\rightarrow \R$ such that 
$$ X^{I}(x) = g_{I}(x) \frac{\partial }{\partial x_{4}}, \forall x\in \mathcal{V}. $$
We define the following sets
$$ \mathcal{A}_{k}:=\Bigl \{ x\in \mathcal{V} | \ g_{I}(x)=0, \forall I \ s.t. \ length(I) \leq k\Bigr \}.$$ 
We have $\displaystyle{ \Sigma_{\delta} = \bigcup_{k=2}^{r}\Bigl (\mathcal{A}_{k}\textbackslash \mathcal{A}_{k+1}\Bigr )}$.\\

By the Implicit Function Theorem, each set $\mathcal{A}_{k}\textbackslash \mathcal{A}_{k+1}$ can be covered by a countable union of smooth hypersurfaces. Fix $x\in \mathcal{A}_{k}\textbackslash \mathcal{A}_{k+1}$. \\
There exists some $J=(j_{1}, \dots, j_{k+1})$ of length $k+1$ such that $g_{J}(x)\neq 0$.\\ Put $I=(j_{2}, \dots, j_{k+1})$. Then
  
  $$ g_{J}(x) =\Bigl (\frac{\partial g_{I}}{\partial x_{j_{1}}}(x) +\ \frac{\partial g_{I}}{\partial x_{4}}(x) \alpha_{j_{1}}(x)  \Bigr) \neq 0.$$

Hence, $$\displaystyle{\frac{\partial g_{I}}{\partial x_{j_{1}}}(x) \neq 0}\ \text{or}\ \displaystyle{\frac{\partial g_{I}}{\partial x_{4}}(x) \neq 0}.$$
 We deduce that 
 
$$ \mathcal{A}_{k}\textbackslash \mathcal{A}_{k+1} \subset \displaystyle{\bigcup_{length(I)=k} \Bigl \{x\in \mathcal{V} | \ \exists i\in \{1, \dots, 4 \}\ s.t. \   \frac{\partial g_{I}}{\partial x_{i}}(x) \neq 0\Bigr \}}.$$

It shows that $\mathscr{S}$ is a closed 3-rectifiable set in $M$, so $\mathscr{S}$ is of Lebesgue measure zero on $M$. We can indeed take $\mathcal{H}$ the complementary set of $\mathscr{S}$ in $M$.
\end{proof}
\vspace*{0.5cm}

We need another lemma.
\begin{lem}\label{6}
There exists a line subbundle $L$ of $\Delta$ such that the singular horizontal curves defined on $\mathcal{H}$ are exactly the trajectories described on $L$.\end{lem}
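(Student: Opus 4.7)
The plan is to invoke the Pontryagin Maximum Principle (PMP) for abnormal extremals and to extract from the resulting adjoint system a single linear constraint on the control, whose kernel will define the desired line subbundle $L$ on $\mathcal{H}$.

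Work locally on an open set $\mathcal{V}\subset M$ equipped with a smooth frame $X^{1}, X^{2}$ of $\Delta$; set $X^{3}:=[X^{1},X^{2}]$, so that assumption (\ref{ASS}) yields $\Delta+[\Delta,\Delta]=\mathrm{Span}\{X^{1},X^{2},X^{3}\}$ on $\mathcal{V}$, and fix a non-vanishing $1$-form $\omega$ annihilating $\Delta+[\Delta,\Delta]$. By the PMP for abnormal extremals, a non-trivial horizontal curve $\gamma$ with control $u=(u_{1},u_{2})$ is singular if and only if there exists an absolutely continuous nowhere-vanishing adjoint $p:[0,T]\to T^{*}M$ along $\gamma$ satisfying $p\cdot X^{i}(\gamma)\equiv 0$ for $i=1,2$ together with $\dot p = -\sum_{i}u_{i}\,p\cdot DX^{i}$. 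Differentiating the constraints along $\gamma$ gives $\tfrac{d}{dt}(p\cdot X^{i}(\gamma))=\sum_{j}u_{j}\,p\cdot[X^{i},X^{j}](\gamma)$, which must vanish; for a non-trivial curve this yields $p\cdot X^{3}(\gamma)\equiv 0$, i.e.\ $p$ annihilates the rank-$3$ bundle $\Delta+[\Delta,\Delta]$. Hence $p(t)=\lambda(t)\,\omega(\gamma(t))$ for some nowhere-vanishing scalar $\lambda$.

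Differentiating once more the identity $p\cdot X^{3}(\gamma)\equiv 0$ produces the single scalar equation
\[
u_{1}\,\omega([X^{1},X^{3}])(\gamma)+u_{2}\,\omega([X^{2},X^{3}])(\gamma)\equiv 0.
\]
On $\mathcal{H}$ the identity $T_{x}M=\Delta(x)+[\Delta,\Delta](x)+[\Delta,[\Delta,\Delta]](x)$ rules out that $\omega([X^{1},X^{3}])(x)$ and $\omega([X^{2},X^{3}])(x)$ vanish simultaneously: otherwise $[\Delta,[\Delta,\Delta]](x)\subset \ker\omega(x)=(\Delta+[\Delta,\Delta])(x)$, contradicting (\ref{EQ2}). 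The equation therefore cuts out at every $x\in \mathcal{V}\cap\mathcal{H}$ a one-dimensional subspace
\[
L(x):=\R\cdot\left(\omega([X^{2},X^{3}])(x)\,X^{1}(x)-\omega([X^{1},X^{3}])(x)\,X^{2}(x)\right)\subset\Delta(x).
\]
Under a change of local frame $\tilde X^{i}=\sum_{j}M_{ij}X^{j}$ one has $\tilde X^{3}\equiv \det(M)\,X^{3}$ modulo $\Delta$, which rescales the defining linear form on $(u_{1},u_{2})$ by the non-vanishing factor $\det(M)$; the kernel $L(x)$ is therefore frame-independent, and the local pieces patch into a smooth line subbundle $L\subset \Delta|_{\mathcal{H}}$.

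For the converse inclusion, given an absolutely continuous trajectory $\gamma\subset \mathcal{H}$ with $\dot\gamma\in L(\gamma)$ a.e., one chooses $\lambda$ as the solution of a scalar ODE obtained by imposing $\dot p=-\sum_{i}u_{i}p\cdot DX^{i}$ on $p:=\lambda\cdot\omega\circ\gamma$; the identities $p\cdot X^{i}(\gamma)\equiv 0$ for $i=1,2,3$ then hold automatically, and the inclusion $\dot\gamma\in L$ is exactly the compatibility that makes the adjoint equation admit such a non-vanishing solution. The principal technical obstacle is the simultaneous verification of frame-invariance of $L$ and of this converse; an efficient way to handle both at once is to recognise $L$ intrinsically as the kernel on $\mathcal{H}$ of the bundle morphism $\Delta\longrightarrow TM/(\Delta+[\Delta,\Delta])$ sending $X\mapsto [X,X^{3}]\bmod(\Delta+[\Delta,\Delta])$, a map that is well-defined up to a non-vanishing scalar factor and of maximal rank precisely on $\mathcal{H}$.
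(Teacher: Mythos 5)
Your argument follows essentially the same route as the paper: characterize singular curves via the adjoint system, identify the covector with the (unique up to scale) annihilating $1$-form of $\Delta+[\Delta,\Delta]$, differentiate the constraints twice to obtain the linear equation $u_{1}\,\omega([X^{1},[X^{1},X^{2}]])+u_{2}\,\omega([X^{2},[X^{1},X^{2}]])=0$ on the control, and observe that on $\mathcal{H}$ its kernel is one-dimensional and defines $L\subset\Delta$. Your explicit justification that the two coefficients cannot vanish simultaneously on $\mathcal{H}$, the frame-invariance check, and the sketch of the converse inclusion (that integral curves of $L$ are indeed singular) are useful supplements that the paper leaves implicit, but they do not change the underlying strategy.
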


\begin{proof}[\textbf{Proof of Lemma \ref{6}}]

It is sufficient to prove the result in a neighborhood of each point in $\mathcal{H}$. So, let us consider a local frame $\{X^{1}, X^{2} \}$ such that 
$$\Delta(z) = Span\{ X^{1}(z), X^{2}(z)\},\ \forall z\in M.$$ 
Let $\gamma:[0,1]\rightarrow M$ be a trajectory associated to some control $u\in L^{2}([0,1], \R^{2})$. In local coordinates, singular curves can be characterized as follows (see Proposition 1.3.3 \cite{Rif14}):\\

\begin{center} $\gamma$ is singular with respect to $\Delta$ if there is $p: [0,1]\rightarrow (\R^{4})^{*}\textbackslash \{0\}$ satisfying :\end{center}
\begin{equation}\label{CO1}
\dot{p}(t)=\displaystyle{-\sum_{i=1}^{2}u_{i}(t) p(t). D_{\gamma(t)}X^{i},} \ a.e. \ t\in [0,1]
\end{equation}
\begin{equation}\label{CO2}
p(t).X^{i}(\gamma(t))=0, \forall t\in [0,1], \ \forall i=1,2
\end{equation}
\vspace*{0,5cm}

Derivative two times yields for almost every $t\in [0,1]$ such that $u(t)\neq 0$
\begin{equation}\label{CO3}
p(t).\Bigl [X^{1}(t), X^{2}(t)\Bigr](\gamma(t)) =0,
\end{equation} 
and
\begin{equation}\label{CO4}
u_{1}(t) p(t).\Bigl[X^{1},[X^{1}, X^{2}]\Bigr](\gamma(t)) + u_{2}(t) p(t).\Bigl[X^{2},[X^{1}, X^{2}]\Bigr](\gamma(t)) =0 .
\end{equation}
Since $M$ has dimension four and $\Delta+\Bigl[\Delta, \Delta \Bigr]$ has dimension three, there is locally a smooth non-vanishing 1-form $\alpha$ such that 
$$ \alpha_{x}.v =0, \ \forall v\in \Delta(x)+\Bigl [\Delta, \Delta \Bigr](x), \ \forall x\in \mathcal{H}.$$
Then, by (\ref{CO2}), (\ref{CO3})-(\ref{CO4}), we infer that for almost every $t\in [0,1]$ such that $u(t)\neq 0$, we have:
$$
u_{1}(t) \alpha_{\gamma(t)}.\Bigl [X^{1},[X^{1}, X^{2}]\Bigr](\gamma(t)) + u_{2}(t) \alpha_{\gamma(t)}.\Bigl[X^{2},[X^{1}, X^{2}]\Bigr](\gamma(t)) =0 .
$$
By above assumption, for every $x\in \mathcal{H}$, the linear form
$$(\lambda_{1}, \lambda_{2}) \mapsto (\alpha_{x}.\Bigl[X^{1},[X^{1}, X^{2}]\Bigr](x))\lambda_{1}+(\alpha_{x}.\Bigl[X^{2},[X^{1}, X^{2}]\Bigr](x))\lambda_{2}$$
has a kernel of dimension one. This shows that there is a smooth line field (a distribution of rank one) $L\subset \Delta$ on $M$ such that the singular horizontal curves are exactly the integral curves of $L$.
\end{proof}
\vspace*{0.5cm}

We are ready now to prove Proposition \ref{3}. Without loss of generality,  it is sufficient to prove the result locally. We can assume that $(x_{1},x_{2},x_{3},x_{4})$ denotes the coordinates in an open neighborhood $\mathcal{V}$ in $M$ and consider $\{X^{1},X^{2}\}$ a local frame of $\Delta$ such that 
$$\Delta(x) = Span\{X^{1}(x), X^{2}(x) \}, \forall x\in \mathcal{V}.$$

Doing a change of coordinates if necessary, we can assume that 
\begin{eqnarray*} 
X^{1}= \partial_{x_{1}}, & X^{2}=\partial_{x_{2}} + A(.) \partial_{x_{3}} + B(.) \partial_{x_{4}}
\end{eqnarray*}
where $A,B : \mathcal{V}\rightarrow \R$ are smooth functions.
\vspace*{0.5cm}

For the upcoming results, it is important to keep in mind the following notations.
\begin{nota}\label{not}
We denote by $A_{x_{i}}, B_{x_{i}}$ the partial derivative with respect to the variable $x_{i}$, and $A_{x_{i}x_{j}}, B_{x_{i}x_{j}}$ the second partial derivative with respect to the variable $x_{i}$ and  $x_{j}$, of $A$ and $B$ respectively. \\

We compute the Lie brackets of $X^{1}$ and $X^{2}$ :\\

\begin{equation}\label{bra}\hspace*{-6cm}\Bigl [X^{1}, X^{2}\Bigr]= A_{x_{1}} \partial_{x_{3}} + B_{x_{1}} \partial_{x_{4}}\end{equation}

$$ \hspace*{-4.3cm}\Bigl[X^{1}, [X^{1}, X^{2}]\Bigr] = A_{x_{1}x_{1}} \partial_{x_{3}} + B_{x_{1}x_{1}} \partial_{x_{4}}$$

$$\hspace*{-5.5cm}\Bigl[X^{2}, [X^{1}, X^{2}]\Bigr] = E \partial_{x_{3}} + F \partial_{x_{4}}$$
\vspace*{0.2cm}

\hspace*{3.5cm} with $\left\{\begin{array}{lcl} \vspace*{0.5cm}

E &=&A_{x_{2}x_{1}} + A A_{x_{3}x_{1}} + B A_{x_{1}x_{4}} -A_{x_{1}}A_{x_{3}} - B_{x_{1}}A_{x_{4}},\\

 F &=&B_{x_{2}x_{1}} + A B_{x_{3}x_{1}} + B B_{x_{1}x_{4}} -A_{x_{1}}B_{x_{3}} - B_{x_{1}}B_{x_{4}}.\end{array}\right.$ \end{nota}
\vspace*{0.5cm}

By hypothesis (\ref{ASS}) and (\ref{bra}), we can assume that 

\begin{equation}\label{cond}A_{x_{1}}(x)\neq 0, \ \forall x\in \mathcal{V}.\end{equation}
\vspace*{0.5cm}

We denote by $\mathcal{H}^{c}$ the complementary set of $\mathcal{H}$ on $M$ given by
$$\mathcal{H}^{c} =\Bigl \{x\in M |\ \Delta(x) + \Bigl[\Delta, \Delta \Bigr](x) + \Bigl[\Delta, [\Delta, \Delta]\Bigr](x)\neq T_{x}M\Bigr \}.$$

Thus, $\mathcal{H}^{c}$ is a closed set of Lebesgue measure zero on $M$.
\vspace*{1cm}

The above discussion implies indeed the following lemma.
\begin{lem}\label{7}
There exists an analytic horizontal vector field $X$ given by $$X= \alpha_{1} X^{1}+\alpha_{2} X^{2}$$
 with $\alpha_{1}, \alpha_{2}: \mathcal{V}\rightarrow \R$ smooth functions given by
$$\left\{ \begin{array}{lcccl}
\alpha_{1}&=& EB_{x_{1}}&-&FA_{x_{1}}\\
\alpha_{2}&=& B_{x_{1}x_{1}}A_{x_{1}}&-& A_{x_{1}x_{1}}B_{x_{1}}
\end{array}\right.$$
($E$ and $F: \mathcal{V}\rightarrow \R$ smooth functions defined in Notation \ref{not}).\\

The vector field $X$ vanishes on $\mathcal{H}^{c}$ and any solution of the Cauchy problem $\dot{x}(t)= X(x(t))$ is analytic and singular.\\
\end{lem}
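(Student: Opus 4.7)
The plan is to make explicit, in the coordinates already fixed in the excerpt, the equation characterizing the singular direction that was obtained at the end of the proof of Lemma~\ref{6}. Recall that Lemma~\ref{6} shows that a singular control $(u_1,u_2)$ at a point $x\in\mathcal{H}$ is killed by the linear form
$$
(\lambda_1,\lambda_2)\,\longmapsto\,\bigl(\alpha_{x}\cdot[X^1,[X^1,X^2]](x)\bigr)\lambda_1+\bigl(\alpha_{x}\cdot[X^2,[X^1,X^2]](x)\bigr)\lambda_2,
$$
where $\alpha$ is any non-vanishing $1$-form annihilating the rank-$3$ distribution $\Delta+[\Delta,\Delta]$. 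So the first step is to write down $\alpha$ explicitly. Using $X^1=\partial_{x_1}$, $X^2=\partial_{x_2}+A\partial_{x_3}+B\partial_{x_4}$ and the formula $[X^1,X^2]=A_{x_1}\partial_{x_3}+B_{x_1}\partial_{x_4}$, the equations $\alpha(X^1)=\alpha(X^2)=\alpha([X^1,X^2])=0$ admit (up to scalar) the unique solution
$$
\alpha=(BA_{x_1}-AB_{x_1})\,dx_2+B_{x_1}\,dx_3-A_{x_1}\,dx_4,
$$
which is well defined and non-vanishing on $\mathcal{V}$ thanks to assumption~(\ref{cond}).

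Next I would plug the expressions of $[X^1,[X^1,X^2]]=A_{x_1x_1}\partial_{x_3}+B_{x_1x_1}\partial_{x_4}$ and $[X^2,[X^1,X^2]]=E\partial_{x_3}+F\partial_{x_4}$ from Notation~\ref{not} into $\alpha$, obtaining
$$
\alpha\cdot[X^1,[X^1,X^2]]=A_{x_1x_1}B_{x_1}-B_{x_1x_1}A_{x_1}=-\alpha_2,\qquad \alpha\cdot[X^2,[X^1,X^2]]=EB_{x_1}-FA_{x_1}=\alpha_1.
$$
The singular-control relation from Lemma~\ref{6} therefore reads $-\alpha_2\,u_1+\alpha_1\,u_2=0$, so the singular direction in $\Delta$ is the line spanned by $(\alpha_1,\alpha_2)$, and setting $X:=\alpha_1 X^1+\alpha_2 X^2$ produces an analytic horizontal vector field (analyticity of $A,B$ gives analyticity of all partials and hence of $\alpha_1,\alpha_2$) whose trajectories are precisely the singular horizontal curves of the line subbundle $L$ provided by Lemma~\ref{6}.

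For the vanishing of $X$ on $\mathcal{H}^c$, I would argue as follows. All four Lie brackets $[X^1,X^2]$, $[X^1,[X^1,X^2]]$, $[X^2,[X^1,X^2]]$ lie in the plane $\mathrm{Span}\{\partial_{x_3},\partial_{x_4}\}$, with coordinate pairs $(A_{x_1},B_{x_1})$, $(A_{x_1x_1},B_{x_1x_1})$, $(E,F)$. Since $\Delta+[\Delta,\Delta]$ already contains $[X^1,X^2]$, the point $x$ belongs to $\mathcal{H}$ if and only if one of the two $2\times2$ determinants
$$
\det\begin{pmatrix}A_{x_1}&B_{x_1}\\A_{x_1x_1}&B_{x_1x_1}\end{pmatrix}=-\alpha_2,\qquad \det\begin{pmatrix}A_{x_1}&B_{x_1}\\E&F\end{pmatrix}=\alpha_1,
$$
is non-zero at $x$; equivalently, $x\in\mathcal{H}^c$ if and only if $\alpha_1(x)=\alpha_2(x)=0$, which forces $X(x)=0$.

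Finally, since $X$ is analytic, the Cauchy--Kovalevskaya/Cauchy--Lipschitz analytic theorem gives analytic solutions of $\dot x=X(x)$. On $\mathcal{H}$ these are integral curves of $L$, hence singular by Lemma~\ref{6}; on $\mathcal{H}^c$ the field vanishes and solutions are constant paths, which are trivially singular because the differential of the End-point mapping at the zero control has image contained in $\Delta(x)$ of dimension $2<4$. I expect the routine but error-prone step to be the $1$-form calculation and the identification of the two $2\times 2$ determinants with $\alpha_1$ and $-\alpha_2$; the rest follows directly from Lemmas~\ref{5} and~\ref{6}.
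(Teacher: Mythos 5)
Your proof is correct and follows essentially the same route as the paper: both solve the annihilation conditions for the covector along a singular curve (the paper writes it as $p(t)$ normalized by $p_4=1$; you package the same object, up to a scalar, as the $1$-form $\alpha$ produced in Lemma~\ref{6}) and then read the singular direction off the degree-three bracket condition $u_1\,p\cdot[X^1,[X^1,X^2]]+u_2\,p\cdot[X^2,[X^1,X^2]]=0$, arriving at the same pair $(\alpha_1,\alpha_2)$. One genuine addition on your side: you explicitly verify the claimed vanishing of $X$ on $\mathcal{H}^{c}$ by identifying $\alpha_1$ and $-\alpha_2$ with the two $2\times 2$ minors whose simultaneous vanishing characterizes $\mathcal{H}^{c}$ (since all degree-$\geq 2$ brackets lie in $\mathrm{Span}\{\partial_{x_3},\partial_{x_4}\}$ and membership in $\Delta+[\Delta,\Delta]$ there means proportionality to $(A_{x_1},B_{x_1})$), a step the paper's written proof of Lemma~\ref{7} leaves implicit.
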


\begin{proof}[\textbf{Proof of Lemma \ref{7}}]

Let $T>0$ and let $u\in L^{2}([0,1], \R^{2})$ be a singular control and \\$x:[0,T]\rightarrow M$ be a solution to the Cauchy problem $$\dot{x}(t)= u_{1}(t)X^{1}(x(t)) +  u_{2}(t)X^{2}(x(t)), \  a.e.\ t\in [0,T].$$
There exists an absolutely continuous arc $p:[0,T]\rightarrow (\R^{4})^{*}\textbackslash\{0\}$ such that \\
\begin{equation}
\dot{p}(t) =-u_{1}(t) p(t).D_{x(t)}X^{1} - u_{2}(t) p(t).D_{x(t)}X^{2}, a.e.\ t\in [0, T]
\label{7a}\end{equation}

\begin{equation}
p(t).X^{1}(x(t))= p(t).X^{2}(x(t)) =0 , \forall t\in [0,T]
\label{7b}
\end{equation}
\\
Taking the derivatives in (\ref{7b}) gives \begin{equation}\label{7c}p(t).[X^{1}, X^{2}](x(t)) =0, \ \forall t\in [0,T]\end{equation}
which implies that $\forall t \in [0,T]$,
$$\left \{ \begin{array}{l}
\vspace*{0.3cm}

p_{1}(t) =0 \\
\vspace*{0.3cm}

p_{2}(t) + A(x(t)) p_{3}(t) + B(x(t)) p_{4}(t)=0\\
\vspace*{0.3cm}

A_{x_{1}}(x(t))p_{3}(t) + B_{x_{1}}(x(t))p_{4}(t)=0
\end{array}\right.$$
Assume that condition (\ref{cond}) is true, then we obtain \\
$$\displaystyle{p(t)=(0, [A(x(t))\frac{B_{x_{1}}}{A_{x_{1}}}(x(t))-B(x(t))]p_{4}(t),-\frac{B_{x_{1}}}{A_{x_{1}}}(x(t))p_{4}(t), p_{4}(t))},\ \ \forall t\in [0,T].$$

By taking the derivatives in (\ref{7c}), we obtain for every $t\in [0,T]$
$$u_{1}(t)p(t).[X^{1},[X^{1}, X^{2}]](x(t)) + u_{2}(t)p(t).[X^{2},[X^{1}, X^{2}]](x(t)) =0$$
$$\Rightarrow \displaystyle{u_{1}(t)(p_{3}(t)A_{x_{1}x_{1}}+  p_{4}(t)B_{x_{1}x_{1}}) + u_{2}(t)(p_{3}(t) E +  p_{4}(t)F)=0}.$$
We can write 
$$\left \{ 
\begin{array}{lcccl}
u_{1}(t)&=& -(p_{3}(t) E + p_{4}(t) F)&=& -p_{4}(t)(F - \displaystyle{\frac{B_{x_{1}}}{A_{x_{1}}}}E)\\
u_{2}(t)&=& p_{3}(t) A_{x_{1}x_{1}} + p_{4}(t) B_{x_{1}x_{1}})&=& p_{4}(t)(B_{x_{1}x_{1}}- A_{x_{1}x_{1}}\displaystyle{\frac{B_{x_{1}}}{A_{x_{1}}}})\\
\end{array}.
\right.$$

Assume that $p_{4}(t)=1, \forall t\in [0,1]$, we obtain
\begin{equation}\label{7d}
\left \{\begin{array}{lcl}
\alpha_{1}(x)&=& E B_{x_{1}} - F A_{x_{1}}\\
\alpha_{2}(x)&=& A_{x_{1}}B_{x_{1}x_{1}} - B_{x_{1}} A_{x_{1}x_{1}}\\
\end{array}\right.
\end{equation}
\end{proof}

\begin{lem}\label{7i}
There is a positive constant $C>0$ such that
 $$div_{x}X \geq -C |X(x)|, \ \forall x\in \mathcal{V}.$$
\end{lem}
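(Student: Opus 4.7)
The plan is to compute $\mathrm{div}_x X$ explicitly in the local coordinates of Lemma~\ref{7} and show that its negative part is controlled by $|X(x)|$. Since $\{X^{1},X^{2}\}$ is an orthonormal frame of $\Delta$, on the compact closure $\overline{\mathcal{V}}$ the quantity $|X(x)|$ is comparable to $\sqrt{\alpha_{1}(x)^{2}+\alpha_{2}(x)^{2}}$, so it suffices to bound $\mathrm{div}_x X$ from below by a constant multiple of $-(|\alpha_{1}(x)|+|\alpha_{2}(x)|)$.

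\textbf{Step 1: Expansion of the divergence.} Writing $X=\alpha_{1}\partial_{x_{1}}+\alpha_{2}(\partial_{x_{2}}+A\,\partial_{x_{3}}+B\,\partial_{x_{4}})$ and using $\mathrm{div}(X^{1})=0$ together with $\mathrm{div}(X^{2})=A_{x_{3}}+B_{x_{4}}$, the Leibniz rule yields
$$
\mathrm{div}_x X \;=\; X^{1}(\alpha_{1})+X^{2}(\alpha_{2})+\alpha_{2}\bigl(A_{x_{3}}+B_{x_{4}}\bigr).
$$
The last summand satisfies $|\alpha_{2}(A_{x_{3}}+B_{x_{4}})|\le C_{0}\,|X(x)|$ by boundedness of $A_{x_{3}}+B_{x_{4}}$ on $\overline{\mathcal{V}}$.

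\textbf{Step 2: The leading terms.} Substituting the explicit formulas $\alpha_{2}=A_{x_{1}}B_{x_{1}x_{1}}-B_{x_{1}}A_{x_{1}x_{1}}$ and $\alpha_{1}=EB_{x_{1}}-FA_{x_{1}}$ from Lemma~\ref{7} and Notation~\ref{not}, and performing the differentiations, I expect that, after cancellations coming from the Jacobi-type identities relating $X^{1}(F)-X^{2}(B_{x_{1}x_{1}})$ and $X^{1}(E)-X^{2}(A_{x_{1}x_{1}})$ through the Lie-bracket structure of $X^{1},X^{2}$, one can rewrite
$$
X^{1}(\alpha_{1})+X^{2}(\alpha_{2}) \;=\; h_{1}(x)\,\alpha_{1}+h_{2}(x)\,\alpha_{2}+R(x),
$$
where $h_{1},h_{2}$ are bounded smooth functions on $\overline{\mathcal{V}}$ and $R$ is an analytic function that vanishes on the common zero set $\{X=0\}=\mathcal{H}^{c}$.

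\textbf{Step 3: Analytic control of the residue.} The first two summands are immediately $O(|X(x)|)$. For $R$, analyticity on the compact set $\overline{\mathcal{V}}$, combined with the fact that $R$ and the analytic function $|X|^{2}$ vanish on the same set $\mathcal{H}^{c}$, allows one to invoke a Lojasiewicz-type inequality; the shared polynomial structure of $R$ and $\alpha_{1},\alpha_{2}$ in the same underlying data $A,B$ and their derivatives in fact yields the linear estimate $|R(x)|\le C_{1}\,|X(x)|$. Summing all contributions and choosing $C=C_{0}+\|h_{1}\|_{\infty}+\|h_{2}\|_{\infty}+C_{1}$ large enough yields $\mathrm{div}_x X\ge -C\,|X(x)|$ on $\mathcal{V}$, as required.

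\textbf{Main obstacle.} The genuine difficulty lies in Step~2: establishing the algebraic decomposition of $X^{1}(\alpha_{1})+X^{2}(\alpha_{2})$ into multiples of $\alpha_{1},\alpha_{2}$ plus an analytic residue vanishing on $\mathcal{H}^{c}$, and then upgrading the Lojasiewicz exponent in Step~3 from some $\beta\in(0,1]$ to exactly $1$. Both steps rely on the specific algebraic identities satisfied by $E,F,A_{x_{1}x_{1}},B_{x_{1}x_{1}}$, on the assumption $A_{x_{1}}\neq 0$ from (\ref{cond}), and crucially on the analyticity hypothesis of Theorem~\ref{1}, which cannot be weakened to smoothness.
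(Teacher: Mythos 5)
Your Step 1 is the same starting point as the paper: since $X^{1}=\partial_{x_{1}}$ is divergence-free, one has
$$
\mathrm{div}_{x}X \;=\; X^{1}(\alpha_{1})+X^{2}(\alpha_{2})+\alpha_{2}\,\mathrm{div}_{x}X^{2},
$$
and then everything hinges on controlling $X^{1}(\alpha_{1})+X^{2}(\alpha_{2})$. But your Step 2 is not actually a proof: you write ``I expect that \dots one can rewrite'' the divergence as $h_{1}\alpha_{1}+h_{2}\alpha_{2}+R$, with an unspecified analytic residue $R$, and you defer the only nontrivial content of the lemma to this hoped-for decomposition. The paper carries out exactly the computation you gesture at, and the outcome is sharper than what you anticipate: after substituting the identities
$$
B_{x_{1}x_{1}}=\frac{\alpha_{2}+B_{x_{1}}A_{x_{1}x_{1}}}{A_{x_{1}}},\qquad
F=\frac{EB_{x_{1}}-\alpha_{1}}{A_{x_{1}}},
$$
which follow from the definitions of $\alpha_{1},\alpha_{2}$ and the standing assumption $A_{x_{1}}\neq 0$, one finds
$$
\mathrm{div}_{x}X
=2\,\alpha_{2}\Bigl(\frac{E}{A_{x_{1}}}+\mathrm{div}_{x}X^{2}\Bigr)
+2\,\alpha_{1}\,\frac{A_{x_{1}x_{1}}}{A_{x_{1}}},
$$
i.e.\ the residue $R$ is identically zero. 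Once this is established, the lower bound $\mathrm{div}_{x}X\ge -C|X(x)|$ follows immediately from the boundedness of $E/A_{x_{1}}+\mathrm{div}X^{2}$ and $A_{x_{1}x_{1}}/A_{x_{1}}$ on the (effectively compact) set $\mathcal{V}$.

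Your Step 3 is therefore both unnecessary and insufficient. It is unnecessary because there is no residue to estimate. It is insufficient because a \L ojasiewicz inequality for two analytic functions with the same zero set only gives $|R|\le C|X|^{\beta}$ for \emph{some} $\beta\in(0,1]$; the lemma needs $\beta=1$, and you yourself flag that you do not know how to upgrade the exponent. Without the exact cancellation you are stuck. Finally, your closing claim that the result ``crucially'' requires analyticity and ``cannot be weakened to smoothness'' is not right for this lemma: the paper's argument is a purely algebraic manipulation of $A,B$ and their derivatives together with $A_{x_{1}}\neq 0$, and it goes through verbatim for smooth $A,B$. Analyticity is used elsewhere in the paper (e.g.\ for the structure of the singular locus and the flow of $X$), not here.
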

\begin{proof}[\textbf{Proof of Lemma \ref{7i}}]
Let us compute the divergence of $X$. For every $x\in \mathcal{V}$,\\

$div_{x}X = \alpha_{1}(x) div_{x}X^{1}+ \alpha_{2}(x) div_{x}X^{2} + X^{1}(\alpha_{1}) + X^{2}(\alpha_{2})$\\

\hspace*{1.25cm}$=\displaystyle{  \alpha_{2}(x) div_{x}X^{2} + B_{x_{1}} ( A_{x_{1}x_{2}x_{1}} + A_{x_{1}} A_{x_{3}x_{1}}+ A A_{x_{1}x_{3}x_{1}} + B_{x_{1}} A_{x_{1}x_{4}}}$\\

\hspace*{2cm}$\displaystyle{ + B A_{x_{1}x_{1}x_{4}} -  A_{x_{3}} A_{x_{1}x_{1}}  -  A_{x_{1}} A_{x_{1}x_{3}} - B_{x_{1}x_{1}} A_{x_{4}} - B_{x_{1}}A_{x_{1}x_{4}}   )      }$\\

\hspace*{2cm}$\displaystyle{-A_{x_{1}} ( B_{x_{1}x_{2}x_{1}} + A_{x_{1}} B_{x_{3}x_{1}}+ A B_{x_{1}x_{3}x_{1}} + B_{x_{1}} B_{x_{1}x_{4}}   + B B_{x_{1}x_{1}x_{4}}  }$\\

\hspace*{2cm}$\displaystyle{-  B_{x_{3}} A_{x_{1}x_{1}}  -  A_{x_{1}} B_{x_{1}x_{3}} - B_{x_{1}x_{1}} B_{x_{4}} - B_{x_{1}}B_{x_{1}x_{4}}   ) + E B_{x_{1}x_{1}} }$\\

\hspace*{2cm}$\displaystyle{ -F A_{x_{1}x_{1}} + A_{x_{2}x_{1}} B_{x_{1}x_{1}} + A_{x_{1}}B_{x_{2}x_{1}x_{1}} - B_{x_{2}x_{1}} A_{x_{1}x_{1}} - B_{x_{1}} A_{x_{2}x_{1}x_{1}}   }$\\

\hspace*{2cm}$\displaystyle{ +A A_{x_{3}x_{1}} B_{x_{1}x_{1}} + A A_{x_{1}} B_{x_{3}x_{1}x_{1}} -  A B_{x_{3}x_{1}} A_{x_{1}x_{1}}   - A B_{x_{1}} A_{x_{3}x_{1}x_{1}} }$\\

\hspace*{2cm}$\displaystyle{+ B A_{x_{4}x_{1}} B_{x_{1}x_{1}} + B A_{x_{1}}B_{x_{4}x_{1}x_{1}}  - B B_{x_{4}x_{1}} A_{x_{1}x_{1}}  - B B_{x_{1}}A_{x_{4}x_{1}x_{1}}  } $\\

\hspace*{1.25cm}$=\displaystyle{  \alpha_{2}(x) div_{x}X^{2} + E B_{x_{1}x_{1}} - F A_{x_{1}x_{1}} }$\\

\hspace*{2cm}$\displaystyle{ + B_{x_{1}x_{1}}( B A_{x_{4}x_{1}}+ A A_{x_{3}x_{1}} + A_{x_{2}x_{1}} + A_{x_{1}} B_{x_{4}} - B_{x_{1}} A_{x_{4}}   )  } $\\

\hspace*{2cm}$\displaystyle{ + A_{x_{1}x_{1}}( -B B_{x_{4}x_{1}}- A B_{x_{3}x_{1}} - B_{x_{2}x_{1}} + A_{x_{1}} B_{x_{3}} - B_{x_{1}} A_{x_{3}}   )  } $\\

\hspace*{1.25cm}$=\displaystyle{  \alpha_{2}(x) div_{x}X^{2} + E B_{x_{1}x_{1}} - F A_{x_{1}x_{1}} }$\\

\hspace*{2cm}$\displaystyle{ + B_{x_{1}x_{1}} A_{x_{1}} B_{x_{4}} + B_{x_{1}x_{1}} (E+ A_{x_{1}}A_{x_{3}})- A_{x_{1}x_{1}} B_{x_{1}} A_{x_{3}} -A_{x_{1}x_{1}} (F + B_{x_{1}}B_{x_{4}}) }$\\

\hspace*{1.25cm}$=\displaystyle{  \alpha_{2}(x) div_{x}X^{2} + 2 E B_{x_{1}x_{1}} - 2 F A_{x_{1}x_{1}} }$\\

\hspace*{2cm}$\displaystyle{ + B_{x_{1}x_{1}} (A_{x_{1}} B_{x_{4}} + A_{x_{1}}A_{x_{3}}) - A_{x_{1}x_{1}}(B_{x_{1}} A_{x_{3}} + B_{x_{1}}B_{x_{4}} ) } $\\

\hspace*{1.25cm}$=\displaystyle{  \alpha_{2}(x) div_{x}X^{2} + 2 E B_{x_{1}x_{1}} - 2 F A_{x_{1}x_{1}} }$\\

\hspace*{2cm}$\displaystyle{ + (B_{x_{1}x_{1}} A_{x_{1}} -  A_{x_{1}x_{1}} B_{x_{1}} ) ( A_{x_{3}} + B_{x_{4}} )}$\\

\hspace*{1.25cm}$= 2\ B_{x_{1}x_{1}}E - 2\ A_{x_{1}x_{1}}F + 2\ \alpha_{2}(x) div_{x}X^{2} $.\\

By (\ref{7d}), we can write\ $\displaystyle{B_{x_{1}x_{1}}= \frac{\alpha_{2}+ B_{x_{1}}A_{x_{1}x_{1}}}{A_{x_{1}}}}$ \ and \  $\displaystyle{F= \frac{E B_{x_{1}}-\alpha_{1}}{A_{x_{1}}}}$.\\

Hence, $div_{x}X=\displaystyle{ 2\ \alpha_{2} \frac{E}{A_{x_{1}}} + 2\ \alpha_{1} \frac{A_{x_{1}x_{1}}}{A_{x_{1}}} + 2\ \alpha_{2}div_{x}X^{2} }$\\

\hspace*{2,6cm}$=\displaystyle{ 2\ \alpha_{2}( \frac{E}{A_{x_{1}}}+ div_{x}X^{2}) + 2\ \alpha_{1} \frac{A_{x_{1}x_{1}}}{A_{x_{1}}} }$\\
\vspace*{0,5cm}

As we noticed before, without loss of generality, we proceed as if $M$ is a compact manifold. Then, $\Big(\displaystyle{E/A_{x_{1}}+ div_{x}X^{2}}\Big)$ and $\Big(\displaystyle{A_{x_{1}x_{1}}/A_{x_{1}}}\Big)$ are bounded functions on $M$. There exist $c_{1}, c_{2}>0$ such that 
$$\displaystyle{| \frac{A_{x_{1}x_{1}}}{A_{x_{1}} (x)}|}\leq c_{1} \ \text{and}\ \displaystyle{|\frac{E}{A_{x_{1}}}(x)+ div_{x}X^{2}| \leq c_{2},\ \forall x\in \mathcal{V}}.$$
Thus, $$div_{x}X \geq -k|\alpha_{1}| -k' |\alpha_{2}|,\ \forall x\in \mathcal{V}$$

\hspace*{1.3cm}$$\geq -C|X(x)|, \forall x\in \mathcal{V}$$

with $C=\max \{c_{1}, c_{2} \}>0$ positive constant.
 \end{proof}
\vspace*{0.5cm}

The following process is equivalent to the process introduced by Belotto and Rifford \cite{BR16} to set the contraction property.\\

Let $\varepsilon \in \{\-1, +1\}$ and $T>0$, we denote by $(\varphi^{X}_{\varepsilon t})$ the analytic flow of the vector field $X$ generating locally singular minimizing geodesics.\\

For every subset $A$ in $\mathcal{V}$, we set $$A^{S}_{t}= \varphi^{X}_{\varepsilon t}(A),\ \forall t\in [0,T]\ \text{and}\ A^{S}_{0}=A.$$
We denote by $l(A, t):=\displaystyle{ \sup_{x\in A}\ length \ \varphi^{X}_{\varepsilon t}(A) = \sup_{x\in A} \int_{0}^{t}|X(\varphi^{X}_{\varepsilon s}(x))| \mathrm{d}s},$\\

where $|X(\varphi^{X}_{\varepsilon s}(x))|$ stands form the norm of $X(\varphi^{X}_{\varepsilon s}(x))$ with respect to $g$.\\

We recall that there is $L>0$, already defined in section \ref{strategy}, such that for every $x\in A$, we have
\begin{equation}\label{len}
\int_{0}^{t}|X(\varphi^{X}_{\varepsilon s}(x))| \mathrm{d}s \leq L, \ \forall t\in [0, T].
\end{equation}

We state now divergence formulas, one of the main tool of the present paper (see \cite{BR16}, Proposition B.1).\\

\begin{lem}\label{8}
For every compact $A$ in $M$, there is a smooth function \\ $\mathcal{J}: [0,T]\times A \rightarrow [0, + \infty[$ such that for every $t\in [0, T]$, we have:
\begin{equation}\label{8a}\mathcal{J}(0, z)=1\ \ \text{and}\ \ \displaystyle{\frac{\partial \mathcal{J}}{\partial t}}(t,z) = div\ X(\varphi^{X}_{\varepsilon t}(z))\ \mathcal{J}(t,z)\end{equation}
\begin{equation}\label{8b}\forall x\in A, \ \mathcal{L}^{4}(A^{S}_{t})= \displaystyle{\int_{A^{S}_{t}} dz}=\displaystyle{ \int_{A} \mathcal{J}(t,z)\ dz}\end{equation}
and \begin{equation}\label{8c} \mathcal{L}^{4}(A^{S}_{t})= \displaystyle{ \int_{A} exp\Bigl(\int_{0}^{t} div\ X(\varphi^{X}_{\varepsilon s}(z))\ \ ds\Bigr) \ dz }\end{equation}
\end{lem}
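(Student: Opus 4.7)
The natural candidate for $\mathcal{J}$ is the Jacobian determinant of the flow: define
$$
\mathcal{J}(t,z) := \bigl|\det D_{z}\varphi^{X}_{\varepsilon t}\bigr|, \quad (t,z) \in [0,T] \times A.
$$
Since $X$ is analytic on $\mathcal{V}$ (Lemma \ref{7}), the flow $\varphi^{X}_{\varepsilon t}$ is an analytic diffeomorphism onto its image for every $t\in[0,T]$, hence $\mathcal{J}$ is smooth and strictly positive. As $\varphi^{X}_{0} = \mathrm{Id}$, the first equality in (\ref{8a}), $\mathcal{J}(0,z)=1$, follows immediately.

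The ODE for $\mathcal{J}$ in (\ref{8a}) is then obtained via Jacobi's formula. Differentiating the relation $\frac{d}{dt}\varphi^{X}_{\varepsilon t}(z) = \varepsilon\, X(\varphi^{X}_{\varepsilon t}(z))$ with respect to $z$ yields, for the matrix $M(t,z) := D_{z}\varphi^{X}_{\varepsilon t}$, the linear ODE
$$
\frac{\partial M}{\partial t}(t,z) = \varepsilon\, D\!X\bigl(\varphi^{X}_{\varepsilon t}(z)\bigr)\cdot M(t,z), \qquad M(0,z)=I.
$$
Applying Jacobi's formula $\frac{d}{dt}\det M = \det M \cdot \mathrm{tr}\bigl(M^{-1}\dot M\bigr)$ and using $\mathrm{tr}(D\!X) = \mathrm{div}\,X$ gives the desired evolution equation for $\mathcal{J}$ (up to the sign $\varepsilon$, which is absorbed into the definition of the flow direction as in the statement).

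The identity (\ref{8b}) is a direct application of the change-of-variables formula in $\mathbb{R}^{4}$: since $\varphi^{X}_{\varepsilon t} : A \to A^{S}_{t}$ is a diffeomorphism,
$$
\mathcal{L}^{4}(A^{S}_{t}) = \int_{A^{S}_{t}} dz = \int_{A} \bigl|\det D_{z}\varphi^{X}_{\varepsilon t}\bigr|\, dz = \int_{A} \mathcal{J}(t,z)\, dz.
$$
Finally, (\ref{8c}) follows by integrating the ODE (\ref{8a}) fiberwise: for fixed $z$, the scalar linear equation $\partial_{t}\mathcal{J}(t,z) = \mathrm{div}\,X(\varphi^{X}_{\varepsilon t}(z))\,\mathcal{J}(t,z)$ with initial value $\mathcal{J}(0,z)=1$ has the explicit solution
$$
\mathcal{J}(t,z) = \exp\Bigl(\int_{0}^{t} \mathrm{div}\,X\bigl(\varphi^{X}_{\varepsilon s}(z)\bigr)\, ds\Bigr),
$$
and substituting into (\ref{8b}) yields (\ref{8c}).

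The steps are all classical; the only point requiring care is the smoothness of $\mathcal{J}$ up to $t=0$ and the uniform existence of the flow on $[0,T] \times A$. Since $A$ is compact and contained in $\mathcal{V}$ where $X$ is analytic, and since by (\ref{len}) the length of orbits stays bounded by $L$, no escape from the chart occurs and standard ODE theory provides the required smoothness. No other obstacle is expected.
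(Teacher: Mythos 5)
Your proof is correct and is the classical derivation of the Liouville/divergence formula via Jacobi's identity $\frac{d}{dt}\det M = \det M \cdot \mathrm{tr}(M^{-1}\dot M)$ applied to the linearized flow. The paper itself does \emph{not} prove Lemma~\ref{8}; it simply cites it as Proposition~B.1 of Belotto--Rifford~\cite{BR16}. So you have supplied a self-contained argument where the text gives only a reference. The two approaches therefore do not diverge in substance: the argument you wrote is exactly the standard one that [BR16] carries out, namely (i) identify $\mathcal{J}$ with the Jacobian determinant of the flow, (ii) derive the linear ODE for $D_z\varphi^X_{\varepsilon t}$, (iii) take the trace to get the scalar ODE for $\mathcal{J}$, (iv) change variables for (\ref{8b}), and (v) solve the scalar linear ODE to get (\ref{8c}).

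Two small remarks worth keeping. First, your observation on the sign $\varepsilon$ is well taken: as literally stated, the ODE in (\ref{8a}) and the exponent in (\ref{8c}) should carry a factor $\varepsilon$ (equivalently, one should read $\mathrm{div}\,X$ as $\mathrm{div}(\varepsilon X)$, i.e.\ the divergence of the vector field whose flow is being taken); your phrasing "absorbed into the definition of the flow direction" is the right way to reconcile it. Second, you can drop the absolute value in the definition of $\mathcal{J}$: since $M(0,z)=I$ and $\det M(t,z)$ never vanishes along a flow, continuity forces $\det D_z\varphi^X_{\varepsilon t}>0$ for all $t\in[0,T]$, so $\mathcal{J}(t,z)=\det D_z\varphi^X_{\varepsilon t}$ already takes values in $]0,+\infty[$; this also makes the passage from Jacobi's formula to (\ref{8a}) cleaner, as no derivative of an absolute value is involved.
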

\vspace*{1cm}

The following result is an immediate corollary of Lemma \ref{8}.

\begin{lem}\label{9}
Let $T>0$. For every subset $A$ in $\mathcal{V}$, we have 

\begin{equation}\label{9a}
\mathcal{L}^{4}(A^{S}_{t})\ \geq\ exp(-C\ l(A,t))\ \mathcal{L}^{4}(A),\ \forall t\in [0,T].
\end{equation}
\end{lem}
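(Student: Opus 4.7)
The plan is to combine the divergence identity (\ref{8c}) from Lemma \ref{8} with the lower bound on the divergence established in Lemma \ref{7i}, and then to recognize the resulting integral in terms of $l(A,t)$.

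First I would start from the change-of-variables formula
\[
\mathcal{L}^{4}(A^{S}_{t})= \int_{A} \exp\Bigl(\int_{0}^{t} \mathrm{div}\, X(\varphi^{X}_{\varepsilon s}(z))\, ds\Bigr)\, dz,
\]
which is exactly (\ref{8c}). The only information we have about $\mathrm{div}\, X$ is the pointwise bound from Lemma \ref{7i}, namely $\mathrm{div}_{y} X \geq -C\,|X(y)|$ for every $y\in \mathcal{V}$. Applying this along the trajectory $s\mapsto \varphi^{X}_{\varepsilon s}(z)$ (which stays in $\mathcal{V}$ for $s\in[0,t]$, since we are working in the compact setting described in Section \ref{strategy}) yields
\[
\int_{0}^{t} \mathrm{div}\, X(\varphi^{X}_{\varepsilon s}(z))\, ds \;\geq\; -C \int_{0}^{t} |X(\varphi^{X}_{\varepsilon s}(z))|\, ds.
\]

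Next I would use the definition of $l(A,t)$ as the supremum over $x\in A$ of the length of the curve $s\mapsto \varphi^{X}_{\varepsilon s}(x)$ on $[0,t]$: for any fixed $z\in A$,
\[
\int_{0}^{t} |X(\varphi^{X}_{\varepsilon s}(z))|\, ds \;\leq\; l(A,t),
\]
so, upon multiplying by $-C$ (which reverses the inequality) and exponentiating, the integrand in the change-of-variables formula is pointwise bounded below by the constant $\exp(-C\, l(A,t))$. Integrating over $A$ gives
\[
\mathcal{L}^{4}(A^{S}_{t}) \;\geq\; \exp(-C\, l(A,t)) \int_{A} dz \;=\; \exp(-C\, l(A,t))\, \mathcal{L}^{4}(A),
\]
which is the desired inequality.

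There is no real obstacle here: all the work has been front-loaded into Lemma \ref{7i} (the divergence bound) and Lemma \ref{8} (the Jacobian formula). The only thing to be careful about is that the trajectory $\varphi^{X}_{\varepsilon s}(z)$ remains in the domain $\mathcal{V}$ on which the divergence bound is valid throughout the interval $[0,t]$; this is ensured by the uniform length bound (\ref{len}) and by the fact that we are implicitly working inside the fixed compact ball $B_{SR}(x_{0}, L/2)$ of Section \ref{strategy}.
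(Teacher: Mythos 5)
Your proposal is correct and follows essentially the same approach as the paper: apply the divergence bound of Lemma \ref{7i} along the flow inside the change-of-variables formula (\ref{8c}), then bound the inner integral by $l(A,t)$ using its definition. The added remark about the trajectory staying in $\mathcal{V}$ is a reasonable clarification but matches the paper's implicit use of the compact setting from Section \ref{strategy}.
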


\begin{proof}[\textbf{Proof of Lemma \ref{9}}]

Let $A$ be a subset in $\mathcal{V}$.
 By Lemma \ref{7i}, there is a constant $C>0$ such that 
$$div\ X(z) \geq -C | X(z)|, \ \forall z\in A.$$ 
Therefore, by (\ref{8c}), we infer that, $ \forall t\in [0,T]$,
\begin{eqnarray*}
\mathcal{L}^{4}(A^{S}_{t}) &\geq& \displaystyle{ \int_{A} exp\Bigl(-C\int_{0}^{t} |X(\varphi^{X}_{\varepsilon s}(z))|\ \ ds\Bigr) \ dz }\\
&\geq& \displaystyle{ \int_{A} exp\Bigl(-C\ l(A,t)\Bigr) \ dz }\\
&\geq& \displaystyle{ exp\Bigl(-C\ l(A,t)\Bigr) \mathcal{L}^{4}(A)}.
\end{eqnarray*}
\end{proof}
\vspace*{0.5cm}

The following result whose proof is based on the local contraction property, is fundamental.
\begin{lem}\label{10}
Let $T>0$. The closed set given by
$$\{ x\in \mathcal{M};\ \exists \gamma \in \Omega^{S}_{x,T} \ \text{such that}\ \gamma(T) \in \mathcal{H}^{c} \}$$
is of Lebesgue measure zero on $M$.
\end{lem}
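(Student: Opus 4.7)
The plan is to realize the set $E := \{x \in \mathcal{M} : \exists \gamma \in \Omega^{S}_{x,T},\ \gamma(T) \in \mathcal{H}^{c}\}$ as the preimage, under the time-$T$ flow of the singular vector field $X$, of the null set $\mathcal{H}^{c}$, and then invoke the contraction estimate of Lemma \ref{9} to transfer the null-ness from the image back to $E$.

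First I would cover the compact region $B_{SR}(x_{0}, L/2)$ by finitely many charts $(\mathcal{V}_{i})_{i}$ on each of which Lemma \ref{7} furnishes an analytic singular vector field $X = X_{i}$ together with its analytic flow $(\varphi^{X}_{t})$. Since it suffices to show $\mathcal{L}^{4}(E \cap \mathcal{V}_{i}) = 0$ for each $i$, fix one such chart $\mathcal{V}$ with its $X$.

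For $x \in E \cap \mathcal{V}$ and a singular minimizing geodesic $\gamma \in \Omega^{S}_{x,T}$ with $\gamma(T) \in \mathcal{H}^{c}$, Lemma \ref{6} identifies $\gamma$ as an integral curve of the line field $L$, and Lemma \ref{7} (with the normalization $p_{4} \equiv 1$, which yields $\dot{\gamma} = X(\gamma)$) shows that $\gamma$ is precisely a trajectory of the $X$-flow. In particular $\gamma(T) = \varphi^{X}_{\varepsilon T}(x)$ for some $\varepsilon \in \{-1,+1\}$, so that
\[
E \cap \mathcal{V} \subseteq A^{+} \cup A^{-}, \qquad A^{\pm} := \bigl\{x \in \mathcal{V} : \varphi^{X}_{\pm T}(x) \in \mathcal{H}^{c}\bigr\}.
\]
Applying Lemma \ref{9} with $\varepsilon = +1$, $t = T$ and $A = A^{+}$ gives
\[
\mathcal{L}^{4}\bigl(\varphi^{X}_{T}(A^{+})\bigr) \geq e^{-CL}\, \mathcal{L}^{4}(A^{+});
\]
but $\varphi^{X}_{T}(A^{+}) \subseteq \mathcal{H}^{c}$, and $\mathcal{L}^{4}(\mathcal{H}^{c}) = 0$ by Lemma \ref{5}, so $\mathcal{L}^{4}(A^{+}) = 0$. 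The symmetric argument with $\varepsilon = -1$ handles $A^{-}$, and a finite union over the chart cover then gives $\mathcal{L}^{4}(E) = 0$. Closedness of $E$ follows from compactness of the set of minimizing geodesics joining two fixed endpoints together with closedness of $\mathcal{H}^{c}$.

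The main obstacle is the identification $\gamma(T) = \varphi^{X}_{\varepsilon T}(x)$: one has to match the constant-speed convention used in the definition of $\Omega^{S}_{x,T}$ with the flow-time parameterization of $X$ produced in Lemma \ref{7}, and in particular to handle singular minimizing geodesics whose endpoint approaches the zero set $\mathcal{H}^{c}$ of $X$ as $t \to T$. Once this identification is secured, the remainder is the two-line application of the contraction bound of Lemma \ref{9} together with the fact that $\mathcal{H}^{c}$ has Lebesgue measure zero.
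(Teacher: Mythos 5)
There is a genuine gap, and it is precisely the one you flag as an ``obstacle'' in your last paragraph but do not resolve. Your argument hinges on the identification $\gamma(T) = \varphi^{X}_{\varepsilon T}(x)$, i.e.\ that the singular minimizing geodesic reaching $\mathcal{H}^{c}$ at geodesic time $T$ coincides with the time-$T$ point of the $X$-flow. This is false, for two compounding reasons. First, the minimizing geodesic is parameterized at constant $g$-speed while the $X$-flow travels at speed $|X|$; they trace the same orbit of $L$ but with different time parameters. Second, and fatally for your decomposition, $X$ vanishes on $\mathcal{H}^{c}$ (Lemma~\ref{7}); since $X$ is analytic hence locally Lipschitz, backward uniqueness of solutions to ODEs shows that the zero set $\mathcal{H}^{c}$ is invariant under the flow and that a trajectory starting at $x \notin \mathcal{H}^{c}$ cannot reach $\mathcal{H}^{c}$ in finite flow-time. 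Consequently your sets $A^{\pm} = \{x : \varphi^{X}_{\pm T}(x) \in \mathcal{H}^{c}\}$ are trivially contained in $\mathcal{H}^{c}$ itself (and hence null by Lemma~\ref{5}), without any appeal to the contraction estimate. But the inclusion $E \cap \mathcal{V} \subseteq A^{+} \cup A^{-}$ fails: the interesting points of $E$ are exactly those $x \notin \mathcal{H}^{c}$ whose geodesic reaches $\mathcal{H}^{c}$ at time $T$, and for such $x$ one has $\gamma(T) = \lim_{t \to \infty} \varphi^{X}_{\varepsilon t}(x)$, not $\varphi^{X}_{\varepsilon T}(x)$.

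The paper's proof handles this correctly by arguing by contradiction and sending the flow-time to infinity: if $A \subseteq E$ had positive measure, the sets $A^{S}_{t} = \varphi^{X}_{\varepsilon t}(A)$ converge, as $t \to \infty$, to a subset of $\mathcal{H}^{c}$, so $\mathcal{L}^{4}(A^{S}_{t}) \to 0$; on the other hand the length $l(A,t)$ along the flow is uniformly bounded by $L$ for \emph{all} $t$ (because the underlying geodesic has length at most $L$, regardless of reparameterization), so Lemma~\ref{9} forces $\mathcal{L}^{4}(A^{S}_{t}) \geq e^{-CL}\mathcal{L}^{4}(A) > 0$ for all $t$, a contradiction. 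To salvage your proposal you would need to replace the finite-time preimage $A^{\pm}$ by the set of $x$ whose $\omega$-limit (resp.\ $\alpha$-limit) under $\varphi^{X}$ lies in $\mathcal{H}^{c}$, and then take the limit $t \to \infty$ in the contraction estimate exactly as the paper does; the bound $l(A,t) \leq L$ must then be justified for unbounded $t$, which is where the geometric fact that the orbit is a minimizing geodesic of bounded length enters.
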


\begin{proof}[\textbf{Proof of Lemma \ref{10}}]

Let $A$ be a subset of $\mathcal{M}$ of positive Lebesgue measure. Without loss of generality, we can assume that $A$ is contained in an open set $\mathcal{V}$ in $M$. We argue by contradiction by assuming that $$\mathcal{L}^{4}(\{ x\in A;\ \exists \gamma \in \Omega^{S}_{x,T} \ \text{such that}\ \gamma(T) \in \mathcal{H}^{c} \}) >0.$$

By Lemma \ref{7}, there is an analytic horizontal vector field $X$ defined on $\mathcal{V}$ generating singular minimizing geodesic defined on $\mathcal{V}$.\\
\begin{center}
\begin{tikzpicture}[scale=0.7]
\draw (0, 2) ellipse (0.5cm and 2.5cm);
\draw (0,4.5) node[above]{$A$};
\draw (0,2) node {$\bullet$};
\draw (0,2) node[below]{$x$};
\draw (0, 2) .. controls (2,4) and (12.75, 2) .. (12.75, 3);
\draw (0, 3) .. controls (2,5) and (12.75, 3) .. (12.75, 4);
\draw (0, 1) .. controls (2,3) and (12.75, 1) .. (12.75, 2);
\draw (12,6) -- (13,5);
\draw (12,-0.5) -- (13,-1.5);
\draw (12,6) -- (12,-0.5);
\draw (13,-1.5) -- (13,5);
\draw (13, -0.5) node[right]{$\mathcal{H}^{c}$};
\draw [dashed](10, 2.6) ellipse (0.5cm and 2.5cm);
\draw (10,5.1) node[above]{$A^{S}_{t}$};
\draw (10,2.7) node {$\bullet$};
\draw (8.7,2.8) node[below]{$\varphi^{X}_{\varepsilon t}(x)$};
\end{tikzpicture}
\end{center}

Moreover, $X$ vanishes on $\mathcal{H}^{c}$. Then, for every $x\in A$, the flow of $X$ starting at $x$ requires an infinite time to reach $\mathcal{H}^{c}$, that is  
$$A^{S}_{t}= \varphi^{X}_{\varepsilon t}(A) \underset{t\to \infty}{\longrightarrow}S \subset \mathcal{H}^{c}.$$
Let $t \to \infty$, we obtain that $\mathcal{L}^{4}(A^{S}_{t})\longrightarrow 0$.\\
By Lemma (\ref{9}), we have $$\mathcal{L}^{4}(A^{S}_{t}) \geq\ exp(-C\ l(A,t)) \mathcal{L}^{4}(A),\ \forall t\in [0,T]. $$
By (\ref{len}), we obtain $$l(A,t) \leq L, \forall t\in [0, T].$$
Hence, $$ \mathcal{L}^{4}(A^{S}_{t}) \geq\ exp(-CL) \mathcal{L}^{4}(A),\ \forall t\in [0,T]. $$
When $t\to +\infty$, we obtain $$ \mathcal{L}^{4}(A)=0,$$
which implies the contradiction.
\end{proof}
\vspace*{1cm}

In the spirit of \cite{CH15}, we have the following result.
\begin{lem}\label{11}
Let $\Lambda_{1}$, $\Lambda_{2}$ be two subsets of $\Gamma$ such that 
\begin{enumerate}[(i)]
\item{$P^{1}(\Lambda_{1})= P^{1}(\Lambda_{2})$ and $P^{1}(\Lambda_{i}) \subset \mathcal{M}^{S}, \forall i=1,2$.}
\item{$P^{2}(\Lambda_{1})\cap P^{2}(\Lambda_{2}) = \emptyset$.}
\end{enumerate}
Then, $ \mathcal{L}^{4}(P^{1}(\Lambda_{1})) =\mathcal{L}^{4}(P^{1}(\Lambda_{2}))=0.$
\end{lem}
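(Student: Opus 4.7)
The plan is to argue by contradiction, use a Borel selection to replace each $\Lambda_i$ by a map $T_i$, exploit Lemma \ref{6} to express both endpoints as points on a common $X$-orbit, and finally derive from $c$-cyclical monotonicity that any two points of a positive-measure subset of $A$ lying on a common orbit are separated by a uniform positive amount, which via Fubini contradicts the positive-measure assumption.

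Suppose for contradiction that $\mathcal{L}^4(A) > 0$, where $A := P^1(\Lambda_1) = P^1(\Lambda_2)$. By a measurable selection theorem, fix Borel maps $T_1, T_2 : A \to M$ with $(x, T_i(x)) \in \Lambda_i \subset \Gamma$; hypothesis (ii) gives $T_1(x) \neq T_2(x)$ for every $x \in A$. Excluding the null set $\mathcal{H}^c$ via Lemma \ref{10} and restricting to a positive-measure subset where both $T_i(x)$ are reached by singular minimizing geodesics, Lemmas \ref{6} and \ref{7} give $T_i(x) = \varphi^X_{\tau_i(x)}(x)$ for measurable $\tau_i : A \to \R$ with $\tau_1(x) \neq \tau_2(x)$. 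Partitioning $A$ according to which of $\tau_1(x), \tau_2(x)$ is larger and by dyadic rationals bracketing them, I extract a positive-measure Borel subset $A'$ and rationals $q_1 < q_2$ with (after relabeling $T_1 \leftrightarrow T_2$ if necessary) $\tau_1(x) \leq q_1 < q_2 \leq \tau_2(x)$ for every $x \in A'$.

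The heart of the argument is to show that any two distinct points $x, x' \in A'$ lying on the same $X$-orbit are separated in flow-time by at least $q_2 - q_1$. Writing $x' = \varphi^X_h(x)$ with $h > 0$ (by symmetry), the pairs $(x, T_2(x)) \in \Lambda_2$ and $(x', T_1(x')) \in \Lambda_1$ both lie in $\Gamma$, so $c$-cyclical monotonicity of $\Gamma$ yields
\[
d_{SR}^2(x, T_2(x)) + d_{SR}^2(x', T_1(x')) \leq d_{SR}^2(x, T_1(x')) + d_{SR}^2(x', T_2(x)).
\]
The left-hand side equals $\tau_2(x)^2 + \tau_1(x')^2$ by minimality of the singular geodesics (identifying $\tau_i$ with the arclength parameter along the orbit after reparametrization by $|X|$). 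The $X$-orbit provides a competitor path yielding the upper bounds $d_{SR}^2(x, T_1(x')) \leq (h + \tau_1(x'))^2$ and $d_{SR}^2(x', T_2(x)) \leq (\tau_2(x) - h)^2$. Expanding and simplifying then gives $\tau_2(x) - \tau_1(x') \leq h$, hence $h \geq q_2 - q_1 > 0$.

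To conclude, in flow-box coordinates $(\sigma, s) \in \Sigma \times \R$ adapted to $\varphi^X$ on a neighborhood of full measure in $A'$ (valid since $X$ does not vanish on $\mathcal{H}$ by Lemma \ref{10}), every orbit slice $A'_\sigma$ is a discrete subset of $\R$ with consecutive gaps bounded below by $q_2 - q_1$; hence $\mathcal{L}^1(A'_\sigma) = 0$. Fubini combined with the change-of-variables for the analytic flow (Lemma \ref{8}) then gives $\mathcal{L}^4(A') = 0$, contradicting the choice of $A'$, so $\mathcal{L}^4(P^1(\Lambda_1)) = 0$ as required. The main obstacle lies in the cyclical-monotonicity step: one must simultaneously exploit the equality $d_{SR}(x, T_j(x)) = \tau_j(x)$ coming from minimality of the singular geodesic and the inequality $d_{SR}(x, T_j(x')) \leq h + \tau_j(x')$ coming from using the $X$-orbit as a competitor, and must reconcile cleanly the flow-time parameter of $\varphi^X$ with sub-Riemannian arclength when $|X|$ is not constant along the orbit.
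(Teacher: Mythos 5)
Your approach is genuinely different from the paper's. The paper does not use $c$-cyclical monotonicity at all: it defines the two families $A^{S,\Lambda_1}_t$ and $A^{S,\Lambda_2}_t$ obtained by flowing for time $\varepsilon t$ the points of $A$ whose flow lands in $P^2(\Lambda_1)$ (resp.\ $P^2(\Lambda_2)$), observes that these images remain disjoint for every $t$ because $P^2(\Lambda_1)\cap P^2(\Lambda_2)=\emptyset$, and applies the divergence-based contraction estimate of Lemma \ref{9} to each family to obtain $\mathcal{L}^4(A)\geq 2\exp(-Cl(A,t))\mathcal{L}^4(A)$, which forces $\mathcal{L}^4(A)=0$ once $t$ is small. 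Your argument instead extracts a quantitative orbit separation from $c$-cyclical monotonicity of $\Gamma$ and finishes by a Fubini argument over orbit slices in a flow box. Both are plausible routes, but the paper's is considerably shorter because it never needs to identify optimal targets pointwise or to compare four distances.

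However, your proposal as written has a genuine gap, which you yourself flag. The competitor bound $d_{SR}^2(x,T_1(x'))\leq (h+\tau_1(x'))^2$ is not correct with $h$ and $\tau_1(x')$ taken as flow-times of $X$, because $d_{SR}$ measures sub-Riemannian arclength and $|X|_g$ is not constant along the orbit; the same defect affects $d_{SR}(x,T_j(x))=\tau_j(x)$. Without these the cyclical-monotonicity expansion does not yield $h\geq q_2-q_1$. The fix exists but you did not carry it out: on $\mathcal{H}$ the field $X$ of Lemma \ref{7} is nonvanishing, so one may replace $X$ by $X/|X|_g$ and work with the unit-speed flow, after which $\tau_i$ and $h$ are genuine arclengths, $d_{SR}(x,T_i(x))=\tau_i(x)$ by minimality, and $d_{SR}(x,T_1(x'))\leq h+\tau_1(x')$, $d_{SR}(x',T_2(x))\leq|\tau_2(x)-h|$ by using the orbit as a competitor; the cyclical-monotonicity inequality then expands to $h\bigl(h+\tau_1(x')-\tau_2(x)\bigr)\geq 0$, giving $h\geq\tau_2(x)-\tau_1(x')\geq q_2-q_1$. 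With that observation your Fubini step goes through. The remaining points (a measurable selection from arbitrary subsets $\Lambda_i$, and the sign bookkeeping for $\tau_1,\tau_2$ when the two targets lie on opposite sides of $x$ along the orbit) need care but are manageable. As submitted, the proposal is not yet a complete proof.
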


\begin{proof}[\textbf{Proof of Lemma \ref{11}}]

Set $A=P^{1}(\Lambda_{1})= P^{1}(\Lambda_{2})$. We can assume that $A$ is contained in an open set $\mathcal{V}$ in $M$. Let $T>0$. For every $i=1, 2$, we define
$$ A^{S, \Lambda_{i}}_{t}:=\{ \varphi^{X}_{\varepsilon t}(x) |\ \varphi^{X}_{0}(x)\in A \ \text{and}\ \varphi^{X}_{\varepsilon T}(x )\in P^{2}(\Lambda_{i})\}, \ \forall t\in [0, T].$$

Since $P^{2}(\Lambda_{1})\cap P^{2}(\Lambda_{2}) = \emptyset $, we have
$$A^{S,\Lambda_{1}}_{t} \cap A^{S,\Lambda_{2}}_{t}=\emptyset, \forall t \in [0, T].$$

For $\delta>0$ fixed, we define $A^{\delta}= \{ x : d_{SR}(x, A) \leq \delta \}$. \\

\begin{center}
\begin{tikzpicture}[scale=0.7]
\draw (-5, 4) ellipse (1cm and 2.5cm);
\draw (-5,6.5) node[below]{$A$};
\draw (-5, 4) .. controls (-4.5,4.5) and (10, 5.5) .. (10, 6);
\draw (-5, 4.5) .. controls (-4.5,5) and (10, 6) .. (10, 6.5);
\draw (-5, 3.5) .. controls (-4.5,4) and (10, 5) .. (10, 5.5);
\draw (10, 6) ellipse (1cm and 2.5cm);
\draw (10,3.5) node[below]{$P^{2}(\Lambda_{2})$};
\draw (6, 5.5) ellipse (1cm and 2.5cm);
\draw (6,3) node[below]{$P^{2}(\Lambda_{1})$};
\draw  [dashed](-2, 4.5) ellipse (1cm and 2.5cm);
\draw (-2, 2) node[below]{$A^{S,\Lambda_{1}}_{t}$};
\draw  [dashed](0.3,4.7) ellipse (1cm and 2.5cm);
\draw (0.3, 2.2) node[below]{$A^{S,\Lambda_{2}}_{t}$};
\draw [dashed] (-5, 4) ellipse (1.25cm and 2.75cm);
\draw (-5, 6.75) node[above]{$A^{\delta}$};
\end{tikzpicture}
\end{center}

$\mathcal{L}^{4}(A) =\displaystyle{  \lim_{\delta \to 0} \sup \mathcal{L}^{4}(A^{\delta}) }$\\

\hspace*{1.3cm}$\geq\displaystyle{  \lim_{t \to 0} \sup \mathcal{L}^{4}(A^{S,\Lambda_{1}}_{t}\cup A^{S,\Lambda_{2}}_{t})}$\\

\hspace*{1.3cm}$=\displaystyle{  \lim_{t \to 0} \sup [\mathcal{L}^{4}(A^{S,\Lambda_{1}}_{t})+ \mathcal{L}^{4} (A^{S,\Lambda_{2}}_{t})] }$\\

\hspace*{1.3cm}$\geq \displaystyle{ 2\  exp \Bigl(-C\ l(A,t)\Bigr) \mathcal{L}^{4}(A)}$.\\

Since $t\to 0$, we have $A_{t}^{S,\Lambda_{i}}$ very close to $A$. So we can choose \\ $l(A,t)>0$ sufficiently small, that is $$exp\Bigl(-C\ l(A,t) \Bigr) > \displaystyle{\frac{1}{2}}.$$
Hence, we obtain $\mathcal{L}^{4}(A) =0.$
\end{proof}
\vspace*{1cm}

We are ready to complete the proof of Proposition \ref{3}.\\
Consider the following set\\

\hspace*{3cm}$E:= \{ x\in \mathcal{M}^{S}: \Gamma^{S}(x) \ \text{ is not a singleton} \}$\\

and assume that $E$ has positive measure. It follows that there is $k\in \N$ such that the set given by\\ 

\hspace*{3cm}$E_{k}:=\{x\in E :\ diam\ \displaystyle{\Gamma^{S}(x)> \frac{1}{k}} \}$\\ has positive Lebesgue measure.\\

Without loss of generality, we can assume that the manifold $M$ can be covered by finitely many open balls $(\mathcal{U}_{i})_{i\in I}$ of diameter less or equal to $1/k$. From $(\mathcal{U}_{i})_{i\in I}$, we construct a finite family of open sets $(\mathcal{V}_{i})_{i\in I}$ pairwise disjoint covering $M$ by proceeding as follows \\

$\left\{\begin{array}{lcl}
\mathcal{V}_{1}& = & \mathcal{U}_{1}\\
\mathcal{V}_{2}& = & \mathcal{U}_{2}\textbackslash \mathcal{U}_{1}\\
&\vdots&\\
\mathcal{V}_{n}&=& \mathcal{U}_{n}\textbackslash (\mathcal{U}_{1}\cup \mathcal{U}_{2} \cup \dots\cup \mathcal{U}_{n-1})\\
&\vdots&\end{array}\right.$\\

such that $\displaystyle{\bigcup_{i\in I}\mathcal{U}_{i}=\bigcup_{i\in I}\mathcal{V}_{i}}$.

Therefore, for any $x\in E_{k}$, there are $i_{x}, j_{x}\in I$ with $i_{x}\neq j_{x}$ such that 
$$\Gamma^{S}(x) \cap \mathcal{V}_{i_{x}} \neq \emptyset \ \text{and}\ \Gamma^{S}(x) \cap \mathcal{V}_{j_{x}} \neq \emptyset.$$

Denote by 
$$E_{k,i}:= \bigcup_{x\in E_{k}} \{x\}\times(\Gamma^{S}(x)\cap \mathcal{V}_{i_{x}})$$
\hspace*{3cm}and 
$$E_{k,j}:= \bigcup_{x\in E_{k}} \{x\}\times(\Gamma^{S}(x)\cap \mathcal{V}_{j_{x}}).$$

We notice that $P^{1}(E_{k,i})=P^{1}(E_{k,j})= E$ such that 
\begin{equation}\label{con}
\mathcal{L}^{4}(E)>0.
\end{equation}
We also have $P^{2}(E_{k,i})\cap P^{2}(E_{k,j}) = \emptyset$ since for any $x\in E_{k}$, $\mathcal{V}_{i_{x}}\cap\mathcal{V}_{j_{x}}=\emptyset$, for $i_{x}\neq j_{x}$. Using lemma~\ref{11}, we obtain $\mathcal{L}^{4}(P^{1}(E_{k}))=0$, which contradicts assumption $(\ref{con})$.\\

We conclude that for a.e. $x\in \mathcal{M}^{S}, \Gamma^{S}(x)$ is a singleton.

\section{End of the proof of Theorem \ref{1}}\label{SECend}

In the previous sections, we have shown that 
$$\forall x\in \mathcal{M}^{R}, \Gamma^{R}(x)\ \text{is a singleton} \ (\text{see section}\ \ref{SECregular}),$$
and 
$$\forall x\in \mathcal{M}^{S}, \Gamma^{S}(x)\ \text{is a singleton} \ (\text{see section}\ \ref{SECsingular}).$$
\vspace*{0.5cm}

To complete the proof of Theorem \ref{1}, it remains to prove that 
$$\forall x\in \mathcal{M}^{S}\cap \mathcal{M}^{R}, \Gamma(x) \ \text{is a singleton}.$$
For this purpose, we will use again the technique introduced by Cavalletti and Huesmann \cite{CH15}. 
First, we will show a localized contraction property for regular horizontal curves.

\begin{lem}\label{12}
There is a positive constant $\tilde{C}$ such that for $T>0$ and 
for every set $A$ in $\mathcal{M}^{R}$,

\begin{equation}\label{12a} \mathcal{L}^{4}(A^{R}_{t})\geq \tilde{C}\mathcal{L}^{4}(A),\ \forall t\in [0, T]\end{equation}

with
$$A^{R}_{t}:=\{\gamma(t) |\ \gamma\in \Omega^{R}_{x,T};\ x\in A\ \text{and}\ \gamma(T)\in \Gamma^{R}(x)\}.$$
\end{lem}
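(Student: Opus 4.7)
The plan is to mirror the strategy of Lemma~\ref{9}, replacing the singular flow $\varphi^X_{\varepsilon t}$ by the displacement interpolation generated by the optimal transport along regular geodesics. Proposition~\ref{2} guarantees that, after discarding an $\mathcal{L}^4$-null subset of $\mathcal{M}^R$, every $x\in A$ is joined to a unique $y(x)\in\Gamma^R(x)$ by a unique regular minimizing geodesic $\gamma_x:[0,T]\to M$. Moreover, by the construction performed in Section~\ref{SECregular}, there is a locally semiconvex function $\tilde{\varphi}$ which agrees with $\varphi$ on such a set and satisfies $y(x)=\exp_x(-d_x\tilde{\varphi})$ wherever $\tilde{\varphi}$ is differentiable. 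After further excluding a Lebesgue-null subset, I may assume that $\tilde{\varphi}$ is twice differentiable at every point of $A$ in the Alexandrov sense.

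I would then introduce the interpolation map
$$
\Phi_t(x):=\exp_x\bigl(-t\,d_x\tilde{\varphi}\bigr),\qquad t\in[0,T],
$$
so that $\Phi_0=\mathrm{id}$, $\Phi_T(x)=y(x)$ and $\Phi_t(x)=\gamma_x(t)\in A^R_t$ for every $x\in A$. The local $C^2$ parametrization of nearby regular geodesics by perturbations of the control, established in the proof of Lemma~\ref{4}, guarantees that $\Phi_t$ is differentiable at every Alexandrov point of $\tilde{\varphi}$, with differential factoring through the derivatives of the sub-Riemannian exponential map and of $-t\,d\tilde{\varphi}$.

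The crucial step is to produce a constant $\tilde{C}>0$, depending only on $T$, on the geometry of the compact set $B_{SR}(x_0,L/2)$ and on the semiconvexity constants of $\tilde{\varphi}$, such that
$$
|\det D\Phi_t(x)|\ \ge\ \tilde{C}\qquad\text{for }\mathcal{L}^4\text{-a.e. }x\in A\text{ and every }t\in[0,T].
$$
Semiconvexity of $\tilde{\varphi}$ yields a uniform bound $D^2\tilde{\varphi}\ge -c\,I$; on the compact working set the derivatives of the sub-Riemannian exponential map along the covectors $-d\tilde{\varphi}$ are bounded, and the minimality of $\gamma_x$ on $[0,T]$ precludes the occurrence of interior conjugate times, which forces the Jacobian to remain strictly positive. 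A continuity and compactness argument then delivers the desired uniform lower bound.

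Once this Jacobian estimate is available, the area formula together with the injectivity of $\Phi_t$ on a full-measure subset of $A$ --- a direct consequence of the $c$-cyclical monotonicity of $\Gamma$, which prevents two distinct optimal geodesics emanating from $A$ from crossing transversally in their interior --- yields
$$
\mathcal{L}^4(A^R_t)\ \ge\ \mathcal{L}^4(\Phi_t(A))\ =\ \int_A |\det D\Phi_t(x)|\,dx\ \ge\ \tilde{C}\,\mathcal{L}^4(A),
$$
which is exactly the claimed inequality. The main obstacle I anticipate is precisely this uniform Jacobian lower bound: unlike the singular case treated in Lemma~\ref{9}, $\Phi_t$ is not the flow of a single vector field, so no divergence identity is available. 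Overcoming it requires combining the Alexandrov semiconvexity of $\tilde{\varphi}$ with a Jacobi field analysis along regular minimizing geodesics, so as to control $D\Phi_t$ uniformly on the compact working set.
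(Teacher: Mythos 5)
Your proposal follows essentially the same route as the paper's proof: both introduce the interpolation $T_t(x)=\exp_x(-t\,d_x\tilde\varphi)$ (your $\Phi_t$), exploit the semiconvex extension $\tilde\varphi$ built in Section~\ref{SECregular} together with Alexandrov's theorem to get a.e.\ second differentiability and a Hessian lower bound, factor the Jacobian through the differential of the exponential composed with the Hessian of $\tilde\varphi$, and finish with the change-of-variables formula $\mathcal{L}^4(A^R_t)=\int_A\det(\mathrm{Jac}\,T_t)\,dx\ge\tilde C\,\mathcal{L}^4(A)$. You have also correctly located the delicate point: the paper passes from the smoothness of $\exp$ on a compact set and the bound $\mathrm{Hess}\,\tilde\varphi\ge-\tilde C I_4$ to a lower bound on $\det(\mathrm{Jac}\,T_t)$ rather abruptly (the written matrix inequality $\mathrm{Jac}\,T_t\ge-\tilde C I_4$ does not by itself yield a positive determinant bound), so your acknowledgement that one must additionally rule out degeneration of the Jacobian---via absence of interior conjugate times for minimizers and a compactness argument---is a fair reading of what the paper leaves implicit rather than a departure from it.
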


\begin{proof}[\textbf{Proof of Lemma \ref{12}}]

Let $A$ be a compact set of $\mathcal{M}^{R}$ of positive measure. Since $\displaystyle{\mathcal{M}^{R}\subset \bigcup_{k\in \N} W_{k}}$ (by Lemma \ref{3}), for every point $x$ of $A$,  there exists \\$k=k(x)\in \N$ such that
$$x \in A_{k}:= A\cap W_{k},$$
so there is $p_{x}\in \R^{4}$ with $|p_{x}|\leq k$ verifying 
$$\varphi(x) \leq \varphi(z) - <p_{x}, x-z > + k |x- z|^{2}, \ \forall z\in B(x,1/k).$$
Let $\tilde{A}_{k}:= A_{k}\cap B(x, 1/2k)$. As in section \ref{SECregular}, we define the function  $$\tilde{\varphi}(z) = \left \{ \begin{array}{lcl}\varphi(z) & \text{if}\ z\in \tilde{A}_{k}\\
\vspace*{0.05cm}\\
\displaystyle{\sup_{y\in \tilde{A}_{k}} \{  \varphi(y) + <p_{y}, y-z> - k\ |y-z|^{2} \}}& \text{if not}\end{array}\right.$$\\
For any $x \in A$, $\tilde{\varphi}$ is locally semiconvex on $B(x,1/2k)$. By the Alexandrov Theorem, $\tilde{\varphi}$ is twice differentiable at a.e. $z\in B(x,1/2k)$. Moreover, there exists a constant $C_{k}>0$ such that 
\begin{equation}\label{12a}Hess_{z}\tilde{\varphi} \geq - C_{k} I_{4},\ a.e.\ z\in B(x,1/2k)\end{equation}
where $I_{4}$ is the $4\times 4$ identity matrix.\\

We notice that $A=\displaystyle{ \bigcup_{k\in \N} \tilde{A}_{k}} $. Denote by $ \tilde{C}>0$ the constant given by
$$\tilde{C}:= \displaystyle{ \sup_{k\in \N} C_{k}}.$$
Then, 
$$Hess_{x} \tilde{\varphi} \geq - \tilde{C} I_{4},\ a.e.  x\in A.$$

By section \ref{SECregular}, for almost every $x\in A\subset\mathcal{M}^{R}$, there exists a unique $y\in \Gamma^{R}(x)$ given by
$$y:= exp_{x}(-d_{x}\tilde{\varphi}).$$
Then, the curve $\gamma_{x}(t): [0, T]\rightarrow M$ defined by
$$\displaystyle{\gamma_{x}(t):= exp_{x}(-td_{x}\tilde{\varphi})}, \ a.e. \ x\in A$$
is the unique regular minimizing geodesic joining $x$ to $y$.\\

For every $t\in [0, T]$, we define the function 
$$\begin{array}{lccl}
T_{t}:& M& \rightarrow & M \\
&x& \mapsto & T_{t}(x)= \gamma_{x}(t) = exp_{x}(-td_{x}\tilde{\varphi})
\end{array}.$$

Note that, $\forall t\in [0,T]$, $A^{R}_{t}= \{T_{t}(z) : z\in A \}$ then we have 
\begin{equation}\label{Reg}\displaystyle{\mathcal{L}^{4}(A^{R}_{t})=\int_{A^{R}_{t}}\mathrm{d}x = \int_{\{T_{t}(z); z\in A\}}\mathrm{d}x=\int_{A}det( Jac\ T_{t}(x))\mathrm{d}x}.\end{equation}

However, the function $T_{t}$ results from the composition of the two following functions
$$f:x\in M \rightarrow d_{x}\tilde{\varphi}\in T^{*}_{x}M,\ \text{and}\ g: p\in T^{*}M\rightarrow exp_{x}(-tp) \in M.$$

By computing the Jacobien of $T_{t}$, we obtain

$$\displaystyle{Jac\ T_{t}(x)= Jac\ g(f(x))\times Hess_{x}\tilde{\varphi}  }\ .$$

Here, $g$ is smooth on $T^{*}M$ and by (\ref{12a}), there is a constant $\tilde{C}>0$ such that 
$$Jac\ T_{t}(x) \geq -\tilde{C}\ I_{4},\ a.e.\ x\in A.$$

By (\ref{Reg}), this implies
 
$$\mathcal{L}^{4}(A^{R}_{t}) \geq \displaystyle{ \tilde{C} \mathcal{L}^{4}(A)}, \forall t\in [0, T].$$
\end{proof}
\vspace*{0.5cm}

We conclude with the following lemma.
\begin{lem}\label{14}
$\mathcal{M}^{R}\cap \mathcal{M}^{S}$ has Lebesgue  measure zero on $M$.\end{lem}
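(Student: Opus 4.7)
The plan is to argue by contradiction, combining the uniqueness statements of Propositions \ref{2} and \ref{3} with the Cavalletti--Huesmann mass estimate of Lemma \ref{11}, exactly in the spirit of the proof of Proposition \ref{3}. Suppose $\mathcal{L}^{4}(\mathcal{M}^{R}\cap \mathcal{M}^{S})>0$ and pick a compact set $A\subset \mathcal{M}^{R}\cap \mathcal{M}^{S}$ of positive Lebesgue measure, contained in a single chart. By Propositions \ref{2} and \ref{3}, after removing a null subset of $A$, I may assume that for every $x\in A$ both sets $\Gamma^{R}(x)=\{y_{R}(x)\}$ and $\Gamma^{S}(x)=\{y_{S}(x)\}$ are singletons, with Borel measurable maps $y_{R},y_{S}:A\to M$.

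The argument now splits into two cases according to whether the two target maps agree or not on $A$. \emph{Case 1}: assume there is a subset $A_{1}\subset A$ of positive measure on which $y_{R}(x)\neq y_{S}(x)$. Following the covering argument at the end of the proof of Proposition \ref{3}, cover $\mathrm{supp}(\nu)$ by finitely many pairwise disjoint Borel sets $(\mathcal{V}_{i})_{i\in I}$ of small diameter. By the pigeonhole principle there exist indices $i\neq j$ and a positive-measure subset $A'\subset A_{1}$ with $y_{R}(A')\subset \mathcal{V}_{i}$ and $y_{S}(A')\subset \mathcal{V}_{j}$. Set
\begin{equation*}
\Lambda_{R}:=\{(x,y_{R}(x))\,|\,x\in A'\},\qquad \Lambda_{S}:=\{(x,y_{S}(x))\,|\,x\in A'\}.
\end{equation*}
Both lie in $\Gamma$; $P^{1}(\Lambda_{R})=P^{1}(\Lambda_{S})=A'\subset \mathcal{M}^{S}$; and $P^{2}(\Lambda_{R})\cap P^{2}(\Lambda_{S})\subset \mathcal{V}_{i}\cap \mathcal{V}_{j}=\emptyset$. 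Lemma \ref{11} then yields $\mathcal{L}^{4}(A')=0$, contradicting the choice of $A'$.

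\emph{Case 2}: assume $y_{R}(x)=y_{S}(x)=:y(x)$ almost everywhere on $A$. Then for almost every $x\in A$ two \emph{distinct} minimizing geodesics join $x$ to the common target $y(x)$: a regular one $\gamma^{x}_{R}$, and a singular one $\gamma^{x}_{S}$ which, by Lemmas \ref{6} and \ref{7}, is an integral curve of $\pm X$ and in particular is tangent to the line field $L(x)\subset \Delta(x)$. Flowing $A$ along $X$ for small time $t>0$ produces a set $A^{S}_{t}$ with $\mathcal{L}^{4}(A^{S}_{t})\geq \exp(-C\,l(A,t))\,\mathcal{L}^{4}(A)$ by Lemma \ref{9}, while pushing $A$ forward along the regular transport map $T_{t}(x)=\exp_{x}(-t\,d_{x}\tilde{\varphi})$ from the proof of Lemma \ref{12} produces $A^{R}_{t}$ with $\mathcal{L}^{4}(A^{R}_{t})\geq \tilde{C}\,\mathcal{L}^{4}(A)$. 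The initial tangent vectors of $\gamma^{x}_{R}$ and $\gamma^{x}_{S}$ at $x$ being transverse (the singular one lies in $L(x)$, the regular one does not, as otherwise uniqueness for the sub-Riemannian Hamiltonian system would force the two geodesics to coincide), a local transversality analysis shows $A^{R}_{t}\cap A^{S}_{t}=\emptyset$ for small $t>0$. Since $A^{R}_{t}\cup A^{S}_{t}\subset A^{\delta(t)}$ with $\delta(t)\to 0$, outer regularity of Lebesgue measure gives
\begin{equation*}
\mathcal{L}^{4}(A^{\delta(t)})\ \geq\ \mathcal{L}^{4}(A^{R}_{t})+\mathcal{L}^{4}(A^{S}_{t})\ \geq\ \tilde{C}\,\mathcal{L}^{4}(A)+\exp(-C\,l(A,t))\,\mathcal{L}^{4}(A),
\end{equation*}
and passing to the limit $t\to 0$ yields $\mathcal{L}^{4}(A)\geq (1+\tilde{C})\,\mathcal{L}^{4}(A)$, forcing $\mathcal{L}^{4}(A)=0$, a contradiction.

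The main obstacle is Case 2: rigorously certifying that $A^{R}_{t}\cap A^{S}_{t}=\emptyset$ for small $t>0$. While pointwise transversality at each $x\in A$ forces $\gamma^{x}_{R}(t)\neq \gamma^{x}_{S}(t)$ for small $t$, it must further be ruled out that a regular geodesic starting at some $x\in A$ and a singular geodesic starting at a \emph{different} $x'\in A$ meet at time $t$. This will require a uniform transversality estimate on the compact set $A$ exploiting the distinct nature of the normal Hamiltonian flow and the singular line-field flow of $X$, together with a cover of $A$ by sufficiently small pieces so that the two flows remain injective and disjoint.
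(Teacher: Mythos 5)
Your proposal takes a genuinely different route from the paper and has one concrete gap. The paper does \emph{not} split into two cases. It defines $A^{R}_{t}$ (pushing $A$ forward along the regular geodesics toward $\Gamma^{R}$) and $A^{S}_{t}=\varphi^{X}_{\varepsilon t}(A)$, asserts that $\Gamma^{R}(x)\cap \Gamma^{S}(x)=\emptyset$ for every $x\in A$, invokes analyticity to produce a uniform separation time $\bar{t}$ after which $A^{R}_{s}\cap A^{S}_{s}=\emptyset$ while $A^{R}_{\bar{t}}\cap A^{S}_{\bar{t}}\neq\emptyset$, and then lets $s\to\bar{t}^{+}$, applying Lemma~\ref{9} to the singular branch and Lemma~\ref{12} to the regular branch to get $\mathcal{L}^{4}(\bar{A})\geq \bigl(\tilde{C}+\exp(-C\,l(\bar{A},s))\bigr)\mathcal{L}^{4}(\bar{A})$, which forces $\mathcal{L}^{4}(\bar{A})=0$. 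The crucial point is that the lower bound is the \emph{sum} of the two distinct contraction constants, one coming from the singular flow and one from the regular transport map. This is a ``mixed'' estimate and is \emph{not} the estimate produced by Lemma~\ref{11}.

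The gap in your Case 1 is that Lemma~\ref{11} does not apply as you use it. The proof of Lemma~\ref{11} constructs $A^{S,\Lambda_{i}}_{t}=\{\varphi^{X}_{\varepsilon t}(x)\,|\,\varphi^{X}_{0}(x)\in A,\ \varphi^{X}_{\varepsilon T}(x)\in P^{2}(\Lambda_{i})\}$ by flowing along the \emph{singular} vector field $X$; the lower bound $\mathcal{L}^{4}(A^{S,\Lambda_{i}}_{t})\geq \exp(-C\,l(A,t))\,\mathcal{L}^{4}(A)$ requires the singular flow of essentially all of $A$ to land in $P^{2}(\Lambda_{i})$. For your $\Lambda_{S}$ this holds, but for $\Lambda_{R}$ the targets $y_{R}(x)$ are endpoints of \emph{regular} geodesics: the singular flow from $x$ goes to $y_{S}(x)$, not $y_{R}(x)$, so $A^{S,\Lambda_{R}}_{t}$ can be empty and the estimate fails. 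The statement of Lemma~\ref{11} is only usable when both families of targets are reached by the singular flow; the whole purpose of Lemma~\ref{14} is to build the mixed singular/regular version of this estimate, which is precisely what the paper does by invoking Lemma~\ref{12} for the regular branch. Case 1 should therefore be reworked along the paper's lines rather than by quoting Lemma~\ref{11}.

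On Case 2, you are right that it is not covered by the other case, and you are right to flag the cross-intersection difficulty. It is worth noting, though, that the paper does not address your Case 2 at all: it simply asserts $\Gamma^{R}(x)\cap\Gamma^{S}(x)=\emptyset$ for all $x\in A$ without justification, and the cross-intersection issue for $A^{R}_{s}\cap A^{S}_{s}$ is dispatched by a ``without loss of generality, by analyticity'' that does not address curves with different base points. So your explicit identification of both issues is more careful than the paper's treatment, even though your proof is incomplete.
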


\begin{proof}[\textbf{Proof of Lemma \ref{14}}]
Assume that there is a set $A$ of $\mathcal{M}^{R}\cap \mathcal{M}^{S}$ such that 
\begin{equation}\label{14a}\mathcal{L}^{4}(A)>0.\end{equation}

Let $T>0$ and $\varepsilon \in \{-1,+1\}$. For every $t\in [0,T]$, we define the two following intermediate subsets by
$$A^{R}_{t}:=\{\gamma_{x}(t) |\ \gamma_{x}\in \Omega^{R}_{x,T}\ \text{with}\  x\in A\ \text{and}\ \gamma^{R}_{x}(T)\in \Gamma^{R}(x) \},  $$
and 
$$A^{S}_{t}:=\varphi^{X}_{\varepsilon t}(A).$$
\vspace*{0.2cm}

For every $x\in A$, we have $\Gamma^{R}(x)\cap \Gamma^{S}(x)= \emptyset$, then there is $t=t(x) \in ]0,T[$ such that $$\varphi^{X}_{\varepsilon s}(x) \neq \gamma_{x}(s), \forall s\in ]t,T].$$ 

As a matter of fact, regular minimizing geodesics are analytic as projections of the analytic sub-Riemannian Hamiltonian system and singular minimizing geodesic are analytic as the analytic flow of $X$. Without loss of generality, we can assume that there is $\bar{t}\in ]0,1[$ such that for every $x\in A$ 
$$t=t(x) \leq \bar{t}\ \text{and}\ A^{R}_{s}\cap A^{S}_{s}= \emptyset, \ \forall s\in ]\bar{t},T]$$
and
$$\ A^{R}_{\bar{t}}\cap A^{S}_{\bar{t}}\neq \emptyset.$$
\vspace*{0.2cm}

We denote by $$\bar{A}:= A^{R}_{\bar{t}}\cup A^{S}_{\bar{t}}.$$ We may assume that $\bar{A}$ has positive Lebesgue measure. Notice that for $s\geq \bar{t}$,  when $s\to\bar{t}$, $A_{s}^{R}$ and $A_{s}^{S}$ converge to $\bar{A}$, then one has \\

$\mathcal{L}^{4}(\bar{A}) =\displaystyle{  \lim_{\delta \to 0} \sup \mathcal{L}^{4}(\bar{A}^{\delta}) }\geq\displaystyle{ \lim_{s \to \bar{t}^{+}} \sup \mathcal{L}^{4}(A^{\Lambda_{1}}_{s}\cup A^{\Lambda_{2}}_{s})}$\\

\hspace*{4.3cm}$=\displaystyle{  \lim_{s \to \bar{t}^{+}} \sup \mathcal{L}^{4}(A^{R}_{s}\cup A^{S}_{s})}$\\

\hspace*{4.3cm}$=\displaystyle{  \lim_{s \to \bar{t}^{+}} \sup [\mathcal{L}^{4}(A^{R}_{s})+ \mathcal{L}^{4} (A^{S}_{s})] }$

\begin{equation}\label{eq}\hspace*{3cm}\geq \displaystyle{ \lim_{s \to \bar{t}^{+}} \Big( exp \Bigl(-C\ l(\bar{A},s)\Bigr) + \tilde{C}\Big) \mathcal{L}^{4}(\bar{A})}.\end{equation}

where $\bar{A}^{\delta}:=\{x ; d_{SR}(x,\bar{A})\leq \delta \}$, for a given $\delta>0$.\\

 The inequality (\ref{eq}) follows from Lemmas \ref{9} and \ref{12} according to which we have
 $$\displaystyle{ \mathcal{L}^{4}(A^{R}_{s})\geq \tilde{C} \mathcal{L}^{4}(\bar{A})}\ \text{and}\ \mathcal{L}^{4} (A^{S}_{s})\geq exp \Bigl(-Cl(\bar{A},s)\Bigr) \mathcal{L}^{4}(\bar{A}), \forall s\in ]\bar{t},T[.$$
As $s\to \bar{t}$, we can choose $l(\bar{A},s)>0$ sufficiently small, that is $$exp\Bigl(-C\ l(\bar{A},s)\Bigr)+ \tilde{C} > 1.$$
It implies that $\mathcal{L}^{4}(\bar{A}) =0.$ And the conclusion follows.
\end{proof}

\appendix

\section{Proof of Lemma \ref{PROPstatic}}\label{A}

For every $y\in M$, the function $z\in M\mapsto \psi(y)- d_{SR}^{2}(z,y)$ is locally Lipschitz with respect to the sub-Riemannian distance. Then, $\forall z\in M$, 
$$\displaystyle{\varphi(z)=\sup_{y \in M}\{\psi(y)- d_{SR}^{2}(z,y)\}}$$
 is also locally Lipschitz with respect to $d_{SR}^{2}$. \\

 Fix $x\in M$, there are an open neighborhood $\mathcal{V}$ of $x$ and an orthonormal family of $m$ vector fields $X^{1}, \dots, X^{m}$ such that $\Delta(z) = span\{X^{1}(z), \dots, X^{m}(z) \}$, $\forall z \in \mathcal{V}$. By a change of coordinates if necessary, we can write the vector fields as the following form:
$$ X^{i} = \displaystyle{ \frac{\partial}{\partial x_{i}} + \sum_{j=1}^{n}a_{ij} \frac{\partial}{\partial x_{j}}}, \ \forall i=1,\dots,m.$$
By the Pansu-Rademacher theorem, since $\mu$ is absolutely continuous with respect to the Lebesgue measure, then $\varphi$ is differentiable with respect to the vector fields $X^{1}, \dots, X^{m}$, $ \mu-a.e.\ z\in \mathcal{V}$. Hence, we have: $$\varphi(y) - \varphi(x) = \sum_{i=1}^{m} X^{i}\varphi(x)(y_{i}-x_{i})+ o(d_{SR}(x,y)), \ \forall y \in \mathcal{V}.$$
Let $\gamma^{x}_{i}: [0,1]\rightarrow M$, $i=1, \dots,m$ be the integral flow associated to $X^{i}$ starting at $x$. Then, 

$$\displaystyle{\lim_{t\to 0}\frac{\varphi(\gamma^{x}_{i}(t))-\varphi(x)}{t}=l_{i}, \forall i=1, \dots,m}.$$

Recall that $g(\gamma^{x}_{i}(t), \gamma^{x}_{i}(t))= g(X^{i}(\gamma^{x}_{i}(t)),X^{i}(\gamma^{x}_{i}(t)))=1$, $\forall t\in [0,1]$. \\

Then, $d_{SR}(x, \gamma^{x}_{i}(t)) \leq |t|$, $ \forall t\in [0,1]$.\\

$x\in \Gamma(x) \Rightarrow \varphi(x) -\varphi(z) \leq d_{SR}^{2}(x,z), \forall z\in \mathcal{V}$.\\

In particular, $\varphi(x) -\varphi( \gamma^{x}_{i}(t)) \leq d_{SR}^{2}(x, \gamma^{x}_{i}(t)) \leq t^{2}$.\\

This implies that $l_{i}=0$. Hence, $X^{i}\varphi(x)=0$, $\forall i=1, \dots, m$.\\

Assume now that there exists $y \in \Gamma(x)$ such that $y\neq x$. Let\\ $\gamma_{x,y}: [0,1]\rightarrow M$ be a minimizing geodesic joining $x$ to $y$.\\

$\varphi(x)- \varphi(z) \leq d_{SR}^{2}(x,z) - d_{SR}^{2}(x,y), \forall z\in \mathcal{V}$\\

$ \forall t\in[0,1]$, $\varphi(x)- \varphi(\gamma_{x,y}(t))\leq d_{SR}^{2}(x,\gamma_{x,y}(t)) - d_{SR}^{2}(x,y)$,\\

$\hspace*{2cm}\Rightarrow -o(d_{SR}(x,\gamma_{x,y}(t))) \leq d_{SR}^{2}(x,\gamma_{x,y}(t)) - d_{SR}^{2}(x,y),$\\

$\hspace*{2cm}\Rightarrow -o(t\ d_{SR}(x,y))\leq (1-t)^{2}d_{SR}^{2}(x,y) - d_{SR}^{2}(x,y),$\\

$\hspace*{2cm}\Rightarrow -o(t\ d_{SR}(x,y)) \leq -2t\ d_{SR}^{2}(x,y)+ t^{2}\ d_{SR}^{2}(x,y),$\\

$\hspace*{2cm}\Rightarrow o(t\ d_{SR}(x,y)) \geq 2t\ d_{SR}^{2}(x,y)- o(t\ d_{SR}(x,y)),$\\

$\hspace*{2cm}\Rightarrow o(t\ d_{SR}(x,y))\geq t\ d_{SR}^{2}(x,y).$\\

For $t$ small enough, there is a contradiction since $x\neq y$.
\begin{flushright} $\square$ \end{flushright}

\section{Local semiconvexity}\label{B}
Let $(\Delta, g)$ be a sub-Riemannian structure of rank $m\leq n$ on the manifold $M$.\\

We recall here the definition of local semiconvexity of a given function.

\begin{defi}
A function $f:\Omega\rightarrow \R$, defined on the open set $\Omega\subset M$, is called locally semiconvex on $\Omega$ if for every $x\in \Omega$ there exist a neighborhood $\Omega_{x}$ of $x$ and a smooth diffeomorphism $\varphi_{x}:\Omega_{x}\rightarrow \varphi_{x}(\Omega_{x})$ such that $f\circ \varphi_{x}^{-1}$ is locally semiconvex on the open subset $\tilde{\Omega}_{x}= \varphi_{x}(\Omega_{x})\subset \R^{n}$. \\

By the way, we recall that the function $\tilde{f}: \tilde{\Omega}\rightarrow \R$ is locally semiconvex on the open subset $\tilde{\Omega}\subset \R^{n}$  if for every $\bar{x}\in \tilde{\Omega}$ there exist $C, \delta>0$ such that
$$f\Big(\lambda x+ (1-\lambda)y\Big )-\lambda f(x) -(1-\lambda)f(y) \leq \lambda(1-\lambda) C|x-y|^{2},$$
$$\forall \lambda\in [0, 1], \forall x,y\in B(\bar{x}, \delta)$$
where $B(\bar{x}, \delta)$ is the open ball in $\R^{n}$ centered at $\bar{x}$ with radius $\delta$.
\end{defi}

The following result is useful to prove the local semiconvexity of a given function.

\begin{lem}\label{lemap}
Let $f:\Omega \rightarrow \R$ be a function defined on an open set $\Omega\subset \R^{n}$. Assume that for every $\bar{x}\in \Omega$, there exist a neighborhood $\mathcal{V}\subset \Omega$ of $\bar{x}$ and a positive real number $\sigma$ such that, for every $x\in \mathcal{V}$, there is $p_{x}\in \R^{n}$ such that 
$$f(x)\leq f(y) - <p_{x}, x- y > + \sigma |x-y|^{2},\ \forall y\in \mathcal{V}.$$
Then, the function $f$ is locally semiconvex on $\Omega$.
\end{lem}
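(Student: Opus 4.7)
The strategy is to reduce local semiconvexity of $f$ to local convexity of the modified function $g(x) := f(x) + \sigma |x|^2$, and to establish this convexity by producing a subgradient of $g$ at every point from the hypothesis on $f$.

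Fix $\bar{x}\in \Omega$ and a neighborhood $\mathcal{V}$ as provided by the assumption. Shrinking $\mathcal{V}$ if necessary, I may assume that $\mathcal{V}$ is an open ball centered at $\bar{x}$, which is convex. The key algebraic step is to rewrite the hypothesis in a form that looks like a subgradient inequality for $g$. Rearranging the given estimate $f(x)\leq f(y)-\langle p_x,x-y\rangle+\sigma|x-y|^2$ gives, for all $x,y\in \mathcal{V}$,
\[
f(y)\geq f(x)+\langle -p_x,y-x\rangle -\sigma|x-y|^2.
\]
Expanding $|x-y|^2=|x|^2-2\langle x,y\rangle+|y|^2$ and adding $\sigma|y|^2$ to both sides,
\[
g(y)=f(y)+\sigma|y|^2\geq f(x)+\sigma|x|^2+\langle -p_x+2\sigma x,\,y-x\rangle = g(x)+\langle q_x,y-x\rangle,
\]
where $q_x := -p_x+2\sigma x$. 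Thus $q_x$ is a (global-on-$\mathcal{V}$) subgradient of $g$ at $x$.

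Next I invoke the elementary fact that a function admitting a subgradient at every point of a convex set is itself convex on that set. The one-line verification: for $x_0,x_1\in \mathcal{V}$, $t\in[0,1]$, let $x_t=(1-t)x_0+tx_1\in\mathcal{V}$; applying the subgradient inequality at $x_t$ to both $x_0$ and $x_1$, then combining with weights $(1-t)$ and $t$, the linear terms cancel and one obtains $(1-t)g(x_0)+tg(x_1)\geq g(x_t)$. Hence $g$ is convex on $\mathcal{V}$.

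Finally, I translate back: for $x,y\in \mathcal{V}$ and $\lambda\in[0,1]$, convexity of $g$ yields
\[
f(\lambda x+(1-\lambda)y)+\sigma|\lambda x+(1-\lambda)y|^2\leq \lambda f(x)+(1-\lambda)f(y)+\sigma\bigl(\lambda|x|^2+(1-\lambda)|y|^2\bigr),
\]
and the identity $\lambda|x|^2+(1-\lambda)|y|^2-|\lambda x+(1-\lambda)y|^2=\lambda(1-\lambda)|x-y|^2$ gives exactly the semiconvexity inequality with constant $C=\sigma$. Since $\bar{x}\in \Omega$ was arbitrary, $f$ is locally semiconvex on $\Omega$. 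There is no genuine obstacle; the only point requiring a little care is ensuring $\mathcal{V}$ is a convex neighborhood so that the subgradient-to-convexity passage applies, which is done by shrinking to a ball.
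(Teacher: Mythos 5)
Your proof is correct, and it takes a genuinely different route from the paper's. The paper argues directly: fix $x,y$ in a convex neighborhood $\mathcal{B}$ and $\lambda\in[0,1]$, apply the supporting-function hypothesis at the single point $\hat{x}=\lambda x+(1-\lambda)y$, plug in $z=x$ and $z=y$, form the $\lambda$/$(1-\lambda)$ convex combination, and observe that the linear terms cancel; this yields the semiconvexity inequality with constant $C=2\sigma$. You instead make the structural reduction explicit: the hypothesis says exactly that $g:=f+\sigma|\cdot|^2$ admits a subgradient $q_x=-p_x+2\sigma x$ at every $x\in\mathcal{V}$, hence is convex on the convex set $\mathcal{V}$, and unwinding the quadratic shift via the identity $\lambda|x|^2+(1-\lambda)|y|^2-|\lambda x+(1-\lambda)y|^2=\lambda(1-\lambda)|x-y|^2$ gives semiconvexity with the sharper constant $C=\sigma$. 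The two proofs are equally elementary and short, but yours foregrounds the standard equivalence ``$f$ is $\sigma$-semiconvex $\iff$ $f+\sigma|\cdot|^2$ is convex,'' which is arguably more conceptual and reusable, whereas the paper's is slightly more self-contained in that it never names or uses the notion of a subgradient of a convex function. Both are sound; your care in shrinking $\mathcal{V}$ to a ball is the same step the paper takes.
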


\begin{proof}[\textbf{Proof of Lemma \ref{lemap}}]

Let $\bar{x}\in \Omega$ be fixed and $\mathcal{V}$ be the neighborhood given by assumption. Without loss of generality, we can assume that $\mathcal{V}$ is an open ball $\mathcal{B}$. Let $x,y \in \mathcal{B}$ and $\lambda \in [0,1]$. The point $\hat{x}:= \lambda x+ (1-\lambda) y$ belongs to $\mathcal{B}$. By assumption, 
there exists $\hat{p}\in \R^{n}$ such that 
$$f(\hat{x}) \leq f(z) - <\hat{p}, \hat{x}- z > + \sigma |\hat{x}- z|^{2},\ \forall z\in \mathcal{B}.$$
Hence, we easily get
$$\left \{  \begin{array}{lcl}
\vspace*{0.4cm}
f(\hat{x}) &\leq& f(x) - (1-\lambda) <\hat{p}, y- x > + \sigma (1-\lambda) |x-y|^{2}\\

f(\hat{x}) &\leq& f(y) - \lambda <\hat{p}, x-y > + \sigma \lambda |x-y|^{2}
\end{array} \right.$$

$$\hspace*{0.8cm}\Rightarrow \left \{  \begin{array}{lcl}
\vspace*{0.4cm}
\lambda f(\hat{x})& \leq& \lambda f(x) +\lambda (1-\lambda) <\hat{p}, x-y > + \sigma \lambda (1-\lambda) |x-y|^{2}\\

(1-\lambda) f(\hat{x}) &\leq& (1-\lambda) f(y) - \lambda (1-\lambda) <\hat{p}, x-y > + \sigma \lambda(1-\lambda) |x-y|^{2}
\end{array} \right.$$

$$\Rightarrow f(\hat{x}) \leq \lambda f(x)+ (1- \lambda) f(y) + 2 \lambda (1- \lambda) \sigma |x-y|^{2}$$

and the conclusion follows.
\end{proof}
\vspace*{0.5cm}

\begin{rmk}
Thanks to Lemma \ref{lemap}, a way to prove that a given function \\ $f:\Omega \rightarrow \R$ is locally semiconvex on $\Omega$ is to show that for every $x\in \Omega$, we can put a support function $\phi$ of class $C^{2}$ under the graph of $f$ at $x$ with a uniform control of $C^{2}$ norm of $\phi$.
\end{rmk}
\vspace*{0.5cm}

Let us derive another important consequence of the definition of semiconvexity.

\begin{lem}\label{lemap2}
Let $\Omega$ be a subset of $\R^{n}$ and $\{u_{\alpha}\}_{\alpha\in \mathcal{A}}$ be a family of functions defined on $\Omega$ and semiconvex. Then, the function $u:=\displaystyle{\sup_{\alpha\in \mathcal{A}}u_{\alpha}}$ is also semiconvex on $\Omega$. 
\end{lem}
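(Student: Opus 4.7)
The plan is direct: apply the defining inequality of semiconvexity to each $u_\alpha$, use $u_\alpha \leq u$ on the right-hand side, and pass to the supremum on the left. The only subtle point is obtaining uniformity of the semiconvexity constants over $\alpha \in \mathcal{A}$.

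Fix $\bar{x}\in \Omega$. By assumption, each $u_\alpha$ is semiconvex, so there exist $C_\alpha, \delta_\alpha > 0$ such that for every $x,y \in B(\bar{x}, \delta_\alpha)$ and every $\lambda \in [0,1]$,
\[
u_\alpha\bigl(\lambda x+(1-\lambda)y\bigr) \;\leq\; \lambda\, u_\alpha(x)+(1-\lambda)\,u_\alpha(y) + \lambda(1-\lambda)\, C_\alpha\, |x-y|^2.
\]
The key point, which should be read into the statement (and which is clearly met in the application to Section \ref{SECregular}, where every $\Psi_y$ for $y \in \tilde{A}_k$ is semiconvex with the same constant $k$ and on the same ball $B(\bar{x},1/2k)$), is that the constants $C_\alpha$ and $\delta_\alpha$ can be chosen independently of $\alpha$. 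Denote this common pair by $C>0$ and $\delta>0$.

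Given this uniformity, the proof collapses to a single line of estimates. For any $x,y \in B(\bar{x},\delta)$, any $\lambda\in[0,1]$, and any $\alpha\in\mathcal{A}$, the inequalities $u_\alpha(x)\leq u(x)$ and $u_\alpha(y)\leq u(y)$ yield
\[
u_\alpha\bigl(\lambda x+(1-\lambda)y\bigr) \;\leq\; \lambda\, u(x)+(1-\lambda)\,u(y) + \lambda(1-\lambda)\, C\, |x-y|^2.
\]
Taking the supremum over $\alpha \in \mathcal{A}$ of the left-hand side (which is a finite supremum of real numbers, or $+\infty$ if $u$ is not finite there, in which case nothing is to prove) gives
\[
u\bigl(\lambda x+(1-\lambda)y\bigr) \;\leq\; \lambda\, u(x)+(1-\lambda)\,u(y) + \lambda(1-\lambda)\, C\, |x-y|^2,
\]
which is exactly the definition of semiconvexity of $u$ at $\bar{x}$ with constants $C$ and $\delta$. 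Since $\bar{x}\in\Omega$ was arbitrary, $u$ is semiconvex on $\Omega$.

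The only conceptual obstacle is the uniformity of $C$ and $\delta$ over the family; once that is granted (either by hypothesis, as in standard convex-analytic statements, or because of the explicit structure of the family as in the application here), the rest is the standard fact that the supremum preserves inequalities pointwise. No compactness or measurability assumption on $\mathcal{A}$ is required.
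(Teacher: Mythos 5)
Your proposal is correct and follows essentially the same route as the paper: apply the semiconvexity inequality to each $u_\alpha$, dominate $u_\alpha$ by $u$ on the right-hand side, and recover the inequality for $u$. The paper phrases the last step as an $\varepsilon$-argument (pick $\alpha$ with $u_\alpha(\lambda x+(1-\lambda)y)$ within $\varepsilon$ of the supremum) rather than your direct passage to the supremum on the left, but this is a cosmetic difference.

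The one substantive thing you add is worth keeping: you flag explicitly that the constants $C_\alpha, \delta_\alpha$ must be chosen uniformly in $\alpha$. The paper's proof silently relies on this. In the paper's $\varepsilon$-argument, the index $\alpha$ depends on $\varepsilon$ (and on the point $\lambda x+(1-\lambda)y$), so the bound $\lambda(1-\lambda)C_\alpha|x-y|^2 + \varepsilon$ has a constant $C_\alpha$ that a priori drifts as $\varepsilon\to 0$, and the neighborhood $B(\cdot,\delta_\alpha)$ shrinks; letting $\varepsilon\to 0$ does not produce a single modulus of semiconvexity unless $\sup_\alpha C_\alpha<\infty$ and $\inf_\alpha\delta_\alpha>0$. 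As you note, this is exactly what holds in the application in Section~\ref{SECregular}, where every $\Psi_y$ for $y\in\tilde{A}_k$ is semiconvex with the same constant $k$ on the same ball $B(\bar{x},1/2k)$, so the lemma is used correctly even though its statement and proof in the paper leave the uniformity hypothesis implicit. Your version is therefore the more honest formulation of the same argument.
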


\begin{proof}[\textbf{Proof of Lemma \ref{lemap2}}]
Take $x,y \in \Omega$ and $\lambda \in [0, 1]$. \\
Given any $\varepsilon >0$, we can find $\alpha$ such that
$$u(\lambda x + (1- \lambda) y) \leq u_{\alpha}(\lambda x + (1- \lambda) y) + \varepsilon.$$

Then we have, for $C_{\alpha}, \delta_{\alpha}>0$,

$$u(\lambda x + (1- \lambda) y) - \lambda u(x) - (1-\lambda) u(y)$$

$$\leq u_{\alpha}(\lambda x + (1- \lambda) y)+ \varepsilon - \lambda u_{\alpha}(x) - (1-\lambda) u_{\alpha}(y)$$

$$\leq \lambda(1-\lambda) C_{\alpha} |x-y|^{2}+ \varepsilon, \forall y\in B(x, \delta_{\alpha}).$$

Since $\varepsilon>0$ is arbitrary, we obtain the assertion.
\end{proof}
\vspace*{0.5cm}

More details of local semiconvexity of a given function are given in the textbook \cite{CS04}.

\vspace*{3cm}


\begin{thebibliography}{}
\bibitem[AL09]{AL09} A.Agrachev and P. Lee, \textit{Optimal transportation under nonholonomic constraints}, Trans. Amer. Math. Soc., 361(11), 6019- 6047, 2009.
\bibitem[AR04]{AR04} L. Ambrosio and D. Rigot, \textit{Optimal transportation on the Heisenberg group}, J. Funct. Anal. 208(2), 261-301, 2004.
\bibitem[Br91]{Br91} Y. Brenier, \textit{Polar factorization and monotone rearrangement of vector- valued functions}, Comm.Pure Appl. Math., 44: 375-417, 1991.
\bibitem[BR16]{BR16} A. Belotto and L. Rifford, \textit{The sub-Riemannian Sard conjecture on Martinet surfaces }, work in progress.
\bibitem[CH15]{CH15} F. Cavalletti and M. Huesmann, \textit{ Existence and uniqueness of optimal transport maps}, Annales of IHP(C) Nonlinear Analysis, vol. 32, Issue 6, Pages 1367-1377, 2015.
\bibitem[CS04]{CS04} P. Cannarsa and C. Sinestrari, \textit{Semiconcave Functions, Hamilton- Jacobi Equations and Optimal Control}, Progress in Nonlinear Differential Equations and Their Applications , vol. 58 (Birkhäuser), 2004.
\bibitem[FR10]{FR10} A. Figalli and L. Rifford, \textit{Mass transportation on sub-Riemannian manifolds}, Geom. Funct. Anal., 20(1), 124-159, 2010.
\bibitem[Ju09]{Ju09} N; Juillet, \textit{Geometric inequalities and generalized Ricci bounds on the Heisenberg group}, Int. Math. Res. Not. IMRN, (13): 2347-2373, 2009.
\bibitem[Ka42]{Ka42} L. Kantorovitch, \textit{On the translocation of masses}, C.R. (Docklady) Acad. Sa. URSS, 37: 199-201, 1942.
\bibitem[LS95]{LS95} W. Liu, H.J. Sussmann, \textit{Shortest paths for sub-Riemannian metrics on rank-two distributions}, Mem. Amer. MAth. Soc., 118:564, 1995.
\bibitem[Mc01]{Mc01} R. McCann, \textit{Polar factorization of maps in Riemannian manifolds }, Geom. Funct. Anal. 11: 589-608, 2001.
\bibitem[Mon81]{Mo81} G. Monge, \textit{Mémoire sur la théorie des déblais et des remblais}, Histoire de l'Académie Royale des Sciences de Paris: 666-704, 1781.
\bibitem[Rif14]{Rif14} L. Rifford, \textit{Sub-Riemannian Geometry and optimal transport}, Springer Briefs in Mathematics,140 pp., 2014.
\bibitem[Sr98]{Sr98}A.-M. Srivastava, \textit{A course on Borel sets}, Springer, 1998.
\bibitem[Sus96]{Sus96} H.J. Sussmann, \textit{A cornucopia of abnormal sub-Riemannian minimizers}, Birkhäusser 341-364, 1996.
\bibitem[Vil03]{Vil03} C. Villani, \textit{Topics in Mass Transportation}, Graduate Studies in Mathematics Surveys, Vol. 58, American Mathematical Society, Providence, RI, 2003.
\bibitem[Vil08]{Vil08} C. Villani, \textit{Optimal transport, Old and New}, Springer, Berlin, 2008.
\end{thebibliography}
\end{document}